\newtheorem{thm}{Theorem}[section]
\newtheorem{prop}[thm]{Proposition}
\newtheorem{lemma}[thm]{Lemma}
\newtheorem{cor}[thm]{Corollary}
\newtheorem{conj}[thm]{Conjecture}
\newtheorem{def-prop}[thm]{Definition-Proposition}
\newtheorem{prop-def}[thm]{Proposition-Definition}
\theoremstyle{definition} % upshaped
\newtheorem{defi}[thm]{Definition}
\newtheorem{rmk}[thm]{Remark}
\newtheorem{expls}[thm]{Examples}
\newcommand{\C}{{\bf C}}
\newcommand{\Z}{{\bf Z}}
\newcommand{\N}{{\bf N}}
\newcommand{\Q}{{\bf Q}}
\newcommand{\sI}{{\mathscr I}}
\newcommand{\sO}{{\mathscr O}}
\newcommand{\sP}{{\mathscr P}}
\newcommand{\rI}{{\mathcal I}}
\newcommand{\rL}{{\mathcal L}}
\newcommand{\rO}{{\mathcal O}}
\newcommand{\ie}{\textit {i.e.}~}
\newcommand{\cf}{\textit {cf.}~}
\newcommand{\eg}{\textit {e.g.}~}
\newcommand{\loccit}{\textit {loc.cit.}~}
\newcommand{\etc}{\textit {etc.}~}
\newcommand{\resp}{\textit {resp.}~}
\newcommand{\CH}{\mathop{\rm CH}\nolimits} % Chow groups
\newcommand{\ch}{\mathop{\rm ch}\nolimits} % Chern character
\newcommand{\cl}{\mathop{\rm cl}\nolimits} %cycle class map
\newcommand{\dual}{\mathop{^\vee}\nolimits} % dual
\newcommand{\Gr}{\mathop{\rm Gr}\nolimits} % Grassmannian or Grading
\newcommand{\id}{\mathop{\rm id}\nolimits} %identity
\renewcommand{\P}{\mathop{\bf P}\nolimits} % projective space, projectivization
\newcommand{\Pic}{\mathop{\rm Pic}\nolimits} % Picard group
\newcommand{\pr}{\mathop{\rm pr}\nolimits} % projection
\newcommand{\Proj}{\mathop{\rm Proj}\nolimits}
\newcommand{\Spec}{\mathop{\rm Spec}\nolimits}
\newcommand{\Sym}{\mathop{\rm Sym}\nolimits} % Symmetric products
\newcommand{\td}{\mathop{\rm td}\nolimits} % Todd class
\newcommand{\miu}{\mathop{\bf \mu} \nolimits} % roots of unity
\renewcommand{\bar}{\overline}
\newcommand{\inj}{\hookrightarrow}
\newcommand{\surj}{\twoheadrightarrow}
\newcommand{\lra}{\xrightarrow}
\newcommand{\cart}{\ar@{}[dr]|\square} % cartesian diagrams, write it after the left-up term to produce a square in the middle of the diagram
\renewcommand{\dual}{^{\vee}} % dual
\newcommand{\isom}{\simeq} %isomorphism
\renewcommand{\tilde}{\widetilde}
\newcommand{\deff}{\mathrel{:=}}
\newcommand{\m }{\mathop{\mathbf{m}}\nolimits}
\newcommand{\AJ }{\mathop{\rm {AJ}}\nolimits}
\begin{document}

\title[Beauville-Voisin conjecture for generalized Kummer varieties]{Beauville-Voisin Conjecture for generalized Kummer
varieties}
\author{Lie Fu}
\address{D\'epartement de Math\'ematiques et Applications, \'Ecole Normale Sup\'erieure, 45 Rue d'Ulm, 75230 Paris Cedex 05, France}
\email{lie.fu@ens.fr}

\begin{abstract}
Inspired by their results on the Chow rings of projective K3 surfaces, Beauville and Voisin made the following conjecture: given a projective
hyperk\"ahler manifold, for any algebraic cycle which is a
polynomial with rational coefficients of Chern classes of the tangent
bundle and line bundles, it is rationally equivalent to zero if and
only if it is numerically equivalent to zero. In this paper, we prove the Beauville-Voisin conjecture for generalized Kummer varieties.

\end{abstract}
\maketitle
% \setcounter{tocdepth}{1}
% \tableofcontents

%
%
%
%
%
%
%Our main theorem on cycles on abelian varieties is the following:
%
%\begin{thm}[=Theorem \ref{thm:main1}]\label{thm:main1intro}
%Let $A$ be a $g$-dimensional abelian variety. Let $V, W$ be two free
%abelian groups of rank $k\leq n\in\N^+$ respectively. Let
%$\alpha_1,\cdots, \alpha_r\in \Hom_\Z\left(V, W\right)$ be homomorphisms,
%which induce homomorphisms of abelian varieties $f_1,\cdots,
%f_r\colon A\otimes_\Z V\isom A^k\to A\otimes_\Z W\isom A^n$. For
%every $1\leq j\leq r$, let $x_j\deff f_{j*}\left(A^k\right)\in
%\CH^{(n-k)g}\left(A^n\right)$ be the pushforward of the fundamental cycle
%class of $A^k$. Then for any $c_1,\cdots,c_r\in \Q$, the linear
%combination $z\deff \sum_{j=1}^rc_jx_j \in \CH^{(n-k)g}\left(A^n\right)$ is
%rationally equivalent to zero if and only if it is numerically
%equivalent to zero.
%\end{thm}

%\setcounter{section}{-1}
\section{Introduction}\label{sect:intro}

In \cite{MR2047674}, Beauville and Voisin observe the following property of the Chow rings of projective K3 surfaces.
\begin{thm}[Beauville-Voisin]\label{thm:K3}
  Let $S$ be a projective K3 surface. Then\\
  $(i)$ There is a well defined 0-cycle $o\in \CH_0(S)$, which is
  represented by any point on any rational curve on $S$. It is
  called \emph{the canonical cycle}.\\
  $(ii)$ For any two divisors $D$, $D'$, the intersection product $D\cdot
  D'$ is proportional to the canonical cycle $o$ in $\CH_0(S)$.\\
  $(iii)$ $c_2(T_S)=24 o \in \CH_0(S)$.\\
  In particular, for any algebraic cycle which is a polynomial on Chern
  classes of the tangent bundle $T_S$ and of line bundles on $S$, it is rationally equivalent to zero if and only if it is numerically equivalent to zero.
\end{thm}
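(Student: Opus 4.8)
The plan is to establish (i), (ii), (iii) in turn and then derive the final clause formally. The one geometric input I would use throughout is the Bogomolov--Mumford theorem: on a projective K3 surface every ample class is represented by an effective divisor that is a sum of rational curves, so in particular $\mathrm{NS}(S)_{\Q}$ is spanned by classes of rational curves. For (i), I first note that on a single, possibly singular, rational curve $C$ all points are rationally equivalent: pulling back along the normalization $\nu\colon\P^1\to C\hookrightarrow S$ and using $\CH_0(\P^1)=\Z$, every point of $C$ has the same class. Now fix one ample class $A$ and, by Bogomolov--Mumford, a representative $\Sigma=\sum_i R_i$ with each $R_i$ rational; since an effective ample divisor on a K3 is connected (by Kodaira vanishing $h^0(\mathcal{O}_\Sigma)=1$), all points of $\Sigma$ share a single class, which I call $o$. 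For any rational curve $C$ we have $A\cdot C>0$, so $C$ meets $\Sigma$; hence every point of $C$ equals $o$, and $o$ is well defined.

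For (ii), the point is that, by (i), any $0$-cycle supported on a union of rational curves is a multiple of $o$. By bilinearity and the Bogomolov--Mumford spanning, it suffices to treat $C\cdot C'$ for rational curves. If $C\neq C'$ meet properly then $C\cdot C'$ is supported on $C\cap C'\subset C$, hence is a multiple of $o$. The only delicate case is the self-intersection, because a rational curve on a K3 is rigid and cannot be moved off itself; here I would use the self-intersection formula $C\cdot C=i_*\,c_1\!\big(\mathcal{O}_S(C)|_C\big)$, with $i\colon C\hookrightarrow S$, a $0$-cycle supported on $C$ (handled via the normalization when $C$ is singular), again a multiple of $o$. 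Writing an arbitrary divisor as a $\Q$-combination of rational curves and comparing degrees gives $D\cdot D'=(D\cdot D')\,o$.

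Part (iii) is the heart of the matter and the step I expect to be hardest. As $\deg c_2(T_S)=\chitop(S)=24$, it is enough to prove that $c_2(T_S)$ is proportional to $o$; equivalently, to represent the $0$-cycle $c_2(T_S)$ on a union of rational curves. The clean model is a K3 carrying an elliptic fibration $\pi\colon S\to\P^1$ with only nodal fibres: the relative tangent sequence degenerates precisely at the $24$ nodes, so $c_2(T_S)$ is rationally equivalent to the sum of the nodes up to a product of divisor classes; the latter is a multiple of $o$ by (ii) (in fact it vanishes, the relevant divisors being pullbacks from $\P^1$ with $F^2=0$), while each node lies on its rational nodal fibre and so equals $o$ by (i), giving $c_2(T_S)=24\,o$. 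The genuine obstacle is that a general projective K3 admits no elliptic fibration --- for instance one of Picard rank one with $H^2=2$ --- and a Lefschetz pencil in an ample system produces only nodal curves of positive geometric genus, whose nodes are not \apriori equivalent to $o$. Overcoming this is the technical core: I would either localize $c_2(T_S)$ directly on a configuration of rational curves supplied by Bogomolov--Mumford, or show that the homologically trivial, canonically defined cycle $c_2(T_S)-24\,o$ is preserved under a specialization to the elliptically fibred case, where it vanishes.

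Finally the last clause follows formally. A polynomial in the Chern classes of $T_S$ and the $c_1$ of line bundles decomposes by codimension. In codimension $0$ it lies in $\CH^0(S)=\Z$; in codimension $1$ it lies in $\CH^1(S)=\Pic(S)=\mathrm{NS}(S)$, whose numerically trivial part is $\Pic^0(S)=0$ since $H^1(\mathcal{O}_S)=0$, so there rational and numerical triviality coincide; in codimension $2$ the monomials are integer multiples of $c_2(T_S)$, of products $D\cdot D'$, and of $c_1(T_S)\cdot D=0$, all lying in $\Q o$ by (ii) and (iii). Since the degree map is injective on $\Q o$, such a class is rationally equivalent to zero exactly when it is numerically equivalent to zero.
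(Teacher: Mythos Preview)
The paper does not prove this theorem; it is quoted from \cite{MR2047674} as motivation, so there is no ``paper's own proof'' to compare against. What follows is therefore a comparison with the original Beauville--Voisin argument.

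Your treatment of $(i)$, $(ii)$, and the final clause is essentially the same as Beauville--Voisin's and is correct. The one point worth tightening in $(ii)$ is the reduction step: Bogomolov--Mumford gives rational-curve representatives for ample classes, so to span $\mathrm{NS}(S)_\Q$ by rational curves you should write an arbitrary divisor class as a difference of ample classes; you use this implicitly but do not say it.

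Part $(iii)$ has a genuine gap. Your specialization idea does not work: Chow groups of $0$-cycles are not locally constant in families, and the only functoriality available is a specialization map from the generic fibre to the special fibre. Vanishing of $c_2(T_S)-24\,o$ on an elliptic K3 therefore says nothing about a nearby non-elliptic K3; there is no ``deformation'' map in the direction you need, and this is exactly the phenomenon (Mumford's infinite-dimensionality of $\CH_0$) that makes the theorem nontrivial. Your alternative, ``localize $c_2(T_S)$ directly on a configuration of rational curves,'' is not a method but a restatement of the goal.

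The Beauville--Voisin proof of $(iii)$ takes a completely different route: they work in $\CH^2(S\times S)$ and show that the class $\Delta_S - o\times S - S\times o$ lies in the image of $\Pic(S)\otimes\Pic(S)$ under the exterior product map. Restricting this identity to the diagonal, the left side gives $c_2(T_S)-2o$ (up to a degree term), while the right side is a sum of divisor intersections, hence a multiple of $o$ by $(ii)$; comparing degrees yields $c_2(T_S)=24\,o$. The key step---pushing the cohomological K\"unneth decomposition of $\Delta_S$ down to rational equivalence---uses $(ii)$ together with a Bloch--Srinivas/Mumford-type argument on the correspondence, not any fibration structure or deformation.
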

As is pointed out in their paper, the above result is surprising because $\CH_0(S)$ is very huge (`infinite dimensional' in the sense of Mumford \cite{MR0249428}, \cf\cite[Chapter 10]{MR1997577}). In a subsequent paper \cite{MR2187148}, Beauville proposed a conjectural explanation for Theorem \ref{thm:K3} to put it into a larger picture. To explain his idea, let us firstly recall the following notion generalizing K3 surfaces to higher dimensions. See for example \cite{MR730926}, \cite{MR1664696}, or \cite{MR1963559} for a more detailed treatment.

\begin{defi}[\cf\cite{MR730926}] A smooth projective complex variety $X$ is called
\emph{hyperk\"ahler} or \emph{irreducible holomorphic symplectic},
if it is simply connected and $H^{2,0}(X)$ is 1-dimensional and
generated by a holomorphic 2-form which is non-degenerate at each
point of $X$. In particular, a hyperk\"ahler variety has trivial canoncial bundle.
\end{defi}

\begin{expls}\label{examples} Let us give some basic examples of projective
hyperk\"ahler manifolds:
\begin{itemize}
  \item (Beauville \cite{MR730926}) Let $S$ be a projective K3 surface and $n\in \N$, then $S^{[n]}$, which is the Hilbert
  scheme of subschemes of dimension 0 and length $n$, is
  hyperk\"ahler of dimension $2n$.
  \item (Beauville \cite{MR730926}) Let $A$ be an abelian surface and $n\in \N$. Let $s: A^{[n+1]}\to
  A$ be the natural morphism defined by the composition of the Hilbert-Chow morphism $A^{[n+1]}\to A^{(n+1)}$ and the summation $A^{(n+1)}\to A$ using the group law of $A$. It is clear that $s$ is an isotrivial fibration. Then a fibre $K_n\deff s^{-1}\left( O_A\right)$ is hyperk\"ahler
  of dimension $2n$, called \emph{generalized Kummer variety}. The name is justified by the fact that $K_1$ is exactly the Kummer K3 surface associated to $A$.

  \item (Beauville-Donagi \cite{MR818549}) Let $X\subset \P^5$ be a smooth cubic fourfold, then its
  \emph{Fano variety of lines} $F(X)\deff \left\{l\in \Gr\left(\P^1, \P^5\right)~|~ l\subset
  X\right\}$ is hyperk\"ahler of dimension 4.
%  \item (Iliev-Ranestad \cite{MR1806733}, \cite{MR2391437}) Let $X\subset \P^5$ be a smooth cubic
%  fourfold, then its variety of sums of 10 powers is hyperk\"ahler
%  of dimension 4.
%  \item (O'Grady \cite{MR2239344}) Certain double cover of an
%  EPW-sextic is hyperk\"ahler of dimension 4.
%  \item (Debarre-Voisin \cite{MR2746467}) Let $V$ be a
%  10-dimensional vector space and $\sigma\in \wedge^3V\dual$ be a
%  general 3-form, then $\left\{W\in \Gr\left(6, V\right)~|~
%  \sigma|_W=0\right\}$ is hyperk\"ahler of dimension 4.
%  \item (Lehn-Lehn-Sorger-Straten \cite{LehnLehnSorgerStraten}) A contraction of the compactified moduli space of twisted cubics on a smooth cubic fourfold not containing a plane is hyperk\"ahler of dimension 8.
\end{itemize}
\end{expls}

As an attempt to understand Theorem \ref{thm:K3} in a broader framework, Beauville gives the point of view in \cite{MR2187148} that we can regard this result as a `splitting property' of the conjectural Bloch-Beilinson-Murre filtration on Chow groups (see \cite{MR2115000}, \cite{MR1265533}) for certain varieties with trivial canonical bundle.
%\footnote{We remark that this splitting property is also analogous to the decomposition for the Chow groups of abelian varieties that Beauville proved in \cite{MR826463}, see Theorem \ref{thm:Beauville} for the statement.}
He suggests to verify the following down-to-earth consequence of this conjectural splitting of the conjectural filtration on Chow groups of hyperk\"ahler varieties. As a first evidence, the special cases when $X=S^{[2]}$ or $S^{[3]}$ for a projective K3 surface $S$ are verified in his paper \loccit

\begin{conj}[Beauville]\label{conj:Beauville1}
Let $X$ be a projective hyperk\"ahler manifold, and $z\in \CH(X)_\Q$ be
a polynomial with $\Q$-coefficients of the first Chern classes of line
bundles on $X$. Then $z$ is
homologically trivial if and only if $z$ is (rationally equivalent
to) zero.
\end{conj}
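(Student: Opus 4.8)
The plan is to prove Beauville's \emph{weak splitting property}: writing $R^*(X)\subseteq\CH(X)_\Q$ for the $\Q$-subalgebra generated by $\CH^1(X)_\Q=\Pic(X)_\Q$, the statement is exactly that the cycle class map $cl\colon R^*(X)\to H^*(X,\Q)$ is injective. First I would record the base case. A hyperk\"ahler manifold is simply connected, so $H^1(X,\Q)=0$ and $\Pic^0(X)=0$; hence $\CH^1(X)_\Q$ already injects into $H^2(X,\Q)$, and a single class $c_1(L)$ vanishes rationally as soon as it vanishes homologically. All the content of the conjecture therefore lies in controlling \emph{products} of divisor classes.

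The framework I would use is a multiplicative Chow--K\"unneth (MCK) decomposition in the sense of Shen--Vial. Suppose $\h(X)=\bigoplus_{i=0}^{4n}\h^i(X)$ (with $\dim_\C X=2n$) is a self-dual Chow--K\"unneth decomposition whose projectors $\{\pi^i\}$ are \emph{multiplicative}, and write $\CH^k(X)_{(s)}\deff (\pi^{2k-s})_*\CH^k(X)_\Q$ for the resulting bigrading $\CH^k(X)_\Q=\bigoplus_s\CH^k(X)_{(s)}$. Multiplicativity means that the intersection product satisfies $\CH^k(X)_{(s)}\cdot\CH^{k'}(X)_{(s')}\subseteq\CH^{k+k'}(X)_{(s+s')}$, and Murre's conjecture predicts that $\CH^k(X)_{(0)}$ injects into $H^{2k}(X,\Q)$ while $\CH^k(X)_{(s)}$ is homologically trivial for $s>0$.

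Granting such a decomposition, the conjecture follows cleanly. Since $\Pic^0(X)=0$, every divisor class lies in the weight-zero piece $\CH^1(X)_{(0)}$; by multiplicativity any monomial $c_1(L_1)\cdots c_1(L_r)$ then lies in $\CH^r(X)_{(0)}$, so the whole subalgebra satisfies $R^*(X)\subseteq\bigoplus_k\CH^k(X)_{(0)}$. As $\CH^k(X)_{(0)}\inj H^{2k}(X,\Q)$, the map $cl$ is injective on $R^*(X)$, which is the assertion. In top degree this recovers and globalizes the picture of Theorem \ref{thm:K3}: $\CH^{2n}(X)_{(0)}$ is one-dimensional, spanned by a canonical $0$-cycle $o_X$, and every top-degree divisor monomial is forced to be a rational multiple of $o_X$.

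The main obstacle is that the existence of a \emph{multiplicative} Chow--K\"unneth decomposition, together with Murre's injectivity $\CH^k(X)_{(0)}\inj H^{2k}(X,\Q)$, is itself an open part of the Bloch--Beilinson--Murre package for a general hyperk\"ahler $X$; this is exactly why the conjecture is not known unconditionally. In practice I would therefore not attack the general case head-on but verify the splitting property family by family: for each known deformation class one uses the available geometry---an incidence correspondence of Fourier type, a finite group action exhibiting $X$ as a quotient, or a supply of rational and ruled subvarieties carrying $o_X$---to construct the relevant projectors by hand and to check multiplicativity on the divisor subalgebra. The genuinely delicate step in any such verification is promoting \emph{numerical} triviality of an intersection of divisors to \emph{rational} triviality; this is where one must produce enough explicit cycles rationally equivalent to the canonical class $o_X$, generalizing the rational-curve argument behind part $(i)$ of Theorem \ref{thm:K3}.
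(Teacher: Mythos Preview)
The statement you are addressing is Conjecture~\ref{conj:Beauville1}, and the paper does \emph{not} prove it; it is recorded there as an open conjecture. So there is no ``paper's own proof'' to compare your proposal against. Your write-up is not a proof either, and to your credit you say so explicitly: the argument you sketch is the standard conditional deduction of the weak splitting property from the existence of a multiplicative Chow--K\"unneth decomposition together with Murre's injectivity $\CH^k(X)_{(0)}\hookrightarrow H^{2k}(X,\Q)$, both of which are themselves open for a general projective hyperk\"ahler $X$. That reduction is correct and is exactly the heuristic Beauville had in mind, but it does not yield an unconditional result.

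Where the paper actually proves something is Theorem~\ref{thm:main2}, which establishes the stronger Conjecture~\ref{conj:BeauvilleVoisin} for generalized Kummer varieties $K_n$. The method there is rather different from the MCK/Shen--Vial framework you describe: instead of building projectors on $K_n$ directly, one uses the De~Cataldo--Migliorini correspondences $E_\mu$ (and their restrictions $\Gamma_\mu$) to transport the problem to products $A^\mu$ of the abelian surface, and then invokes the Moonen--O'Sullivan theorem, which says that on an abelian variety any polynomial in symmetric line bundles that is numerically trivial is already rationally trivial. This replaces the ``promote numerical to rational triviality'' step you flag as delicate by a theorem specific to abelian varieties, rather than by producing rational curves or a canonical $0$-cycle on $K_n$. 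If you want to turn your outline into an actual proof for a given family, that is the kind of external input you would need.
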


Voisin pursues the work of Beauville and makes in \cite{MR2435839} the following stronger version of Conjecture \ref{conj:Beauville1}, by involving also the Chern classes of the tangent bundle:
\begin{conj}[Beauville-Voisin]\label{conj:BeauvilleVoisin}
Let $X$ be a projective hyperk\"ahler manifold, and $z\in \CH(X)_\Q$ be
a polynomial with $\Q$-coefficients of the first Chern classes of line
bundles on $X$ and the Chern classes of the tangent bundle of $X$. Then $z$ is
numerically trivial if and only if $z$ is (rationally equivalent
to) zero.
\end{conj}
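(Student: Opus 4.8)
The plan is to reformulate the statement as an injectivity property of the cycle class map on a distinguished subring, and then to realize that injectivity through a multiplicative splitting of the (conjectural) Bloch--Beilinson--Murre filtration, constructed by hand whenever a geometric model for $X$ is available. Write $R^\ast(X) \subseteq \CH^\ast(X)_\Q$ for the subring generated by the first Chern classes of line bundles and the Chern classes of $T_X$; every class $z$ occurring in the statement lies in $R^\ast(X)$. Since rational triviality always implies numerical triviality, the content is the converse, and it suffices to prove that the restriction $\cl\colon R^\ast(X) \to H^\ast(X,\Q)$ is injective: granting this, a numerically trivial $z \in R^\ast(X)$ is in particular homologically trivial (numerical and homological equivalence agreeing on these even-degree algebraic classes by hard Lefschetz), whence $\cl(z)=0$ and so $z=0$. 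The entire difficulty is therefore concentrated in the injectivity of $\cl|_{R^\ast(X)}$.

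The conceptual engine I would use is Beauville's expected splitting property: a self-dual multiplicative grading
\[
  \CH^i(X)_\Q \;=\; \bigoplus_{s} \CH^i(X)_{(s)},
\]
splitting the conjectural filtration, such that $\CH^\ast(X)_{(0)}$ is a subring containing all tautological classes and such that $\cl$ is injective on this zeroth piece. Granting a grading with these two properties, the argument closes at once: one shows $R^\ast(X) \subseteq \CH^\ast(X)_{(0)}$, and injectivity of $\cl$ on $\CH^\ast(X)_{(0)}$ then forces a homologically trivial element of $R^\ast(X)$ to vanish. Thus the problem reduces to (i) producing the grading, (ii) checking that the $c_i(T_X)$ and the divisor classes sit in degree $s=0$, and (iii) proving the cycle class map is injective on the zeroth piece, the last of which is a Bloch-type statement about $0$-cycles and is the deepest ingredient.

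Because the filtration is only conjectural, the honest route is to construct the grading unconditionally, one deformation type at a time, using the geometric origin of $X$. For $X = S^{[n]}$ with $S$ a projective K3 surface, I would build the splitting from the Beauville--Voisin ring of $S$ furnished by Theorem \ref{thm:K3}, propagated to the Hilbert scheme through the incidence correspondences, the canonical cycle $o$ and the vanishing of the relevant Chow groups of $S$ supplying the injectivity on $0$-cycles. For the generalized Kummer variety $K_n$, I would pull the Beauville (Fourier--Mukai) decomposition of $\CH^\ast(A)_\Q$ of the abelian surface $A$ back to $A^{[n+1]}$ and restrict to the fibre $s^{-1}(O_A)$; here the multiplicativity of the grading is governed by the interaction of the Fourier transform with the summation morphism $s$, and the zeroth piece is controlled by $\CH^\ast(A)_{(0)}$. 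For the Fano variety of lines $F(X)$ of a cubic fourfold I would transport the structure from the cubic $X$ along the universal line correspondence. In each case one verifies by an explicit computation that the tautological generators land in the zeroth piece.

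The main obstacle is twofold, and it is precisely where the general conjecture remains out of reach. First, this deformation-by-deformation argument only covers the presently known classes of hyperk\"ahler manifolds; a hyperk\"ahler manifold of an undiscovered deformation type, or of any type lacking a tractable geometric model, cannot be treated without the full Bloch--Beilinson--Murre package, so for \emph{arbitrary} $X$ the very construction of the splitting is the barrier. Second, even among known types the two O'Grady examples lack the clean building-block description enjoyed by $S^{[n]}$ and $K_n$, and establishing both the multiplicativity of the grading and the injectivity of $\cl$ on its zeroth piece for them is genuinely hard, the injectivity on $0$-cycles being a Bloch-type vanishing that is not formal. The present paper carries out exactly this program in the generalized Kummer case, where the Fourier transform on $A$ renders the splitting and the required injectivity accessible; a uniform proof for all projective hyperk\"ahler manifolds would demand precisely the missing general construction of the multiplicative splitting together with the Bloch-type injectivity of its degree-zero part.
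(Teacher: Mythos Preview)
This statement is recorded in the paper as an open \emph{conjecture}; the paper does not prove it in general and there is no proof of it to compare against. What the paper actually establishes is the special case $X=K_n$ (Theorem~\ref{thm:main2}). Your proposal correctly recognizes that the general statement is out of reach and offers a conceptual strategy rather than a proof, so on that level it is honest.

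Two remarks, however. First, a slip: you assert that numerical and homological equivalence agree on the relevant classes ``by hard Lefschetz''. That equality is the content of the Lefschetz standard conjecture, not a consequence of the hard Lefschetz theorem, and it is not known for arbitrary hyperk\"ahler $X$. The paper is careful about exactly this point: it phrases the conjecture with ``numerically trivial'' and notes, just after the statement, that swapping in ``homologically trivial'' presupposes the standard conjecture, which Charles--Markman have verified for $S^{[n]}$ and for Fano varieties of lines but which is open in general.

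Second, your one-line sketch of the Kummer case---pull Beauville's Fourier decomposition of $\CH^*(A)$ back to $A^{[n+1]}$, restrict to the fibre $s^{-1}(O_A)$, and read off a multiplicative splitting of $\CH^*(K_n)$ with injective cycle class map on the degree-zero piece---is \emph{not} what the paper does, and as stated it hides the real difficulty: producing such a multiplicative bigrading on $\CH^*(K_n)$ with the required injectivity is itself a hard theorem, not a formality. The paper's argument bypasses any splitting on $K_n$. It lifts $z$ to a class $\gamma$ on $A^{[n+1]}$ (Lemma~\ref{lemma:restriction}), pushes $\gamma$ to the self-products $A^\mu$ via the De~Cataldo--Migliorini correspondences $E_\mu$ (Theorem~\ref{thm:CataldoMigliorini}), and proves the key technical Proposition~\ref{prop:Voisinhard} showing that each $E_{\mu*}(\gamma)$ lies in the subring of $\CH^*(A^\mu)$ generated by symmetric line bundles. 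The injectivity input is then Moonen--O'Sullivan (Theorem~\ref{thm:MO}) applied on the abelian varieties $A^\mu$, not on $K_n$: this upgrades the numerical vanishing of $\beta|_{B_\mu^0}$ to rational vanishing. An additional torsion-translation argument handles the non-identity components $B_\mu^t$, and finally the restricted correspondences $\Gamma_\mu$ and Corollary~\ref{cor:CataldoMigliorini} transport the vanishing back to $K_n$. So the architecture is ``transfer to abelian varieties, apply Moonen--O'Sullivan there, transfer back'', rather than ``build a splitting on $K_n$ and quote injectivity on its zeroth piece''.
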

Here we replaced `homologically trivial' in the original statement in Voisin's paper \cite{MR2435839} by `numerically trivial'. But according to the standard conjecture \cite{MR1265519}, the homological equivalence and the numerical equivalence are expected to coincide. We prefer to state the Beauville-Voisin conjecture in the above slightly stronger form since our proof for generalized Kummer varieties also works in this generality.

In \cite{MR2435839}, Voisin proves Conjecture \ref{conj:BeauvilleVoisin} for the Fano varieties of lines of cubic fourfolds, and for $S^{[n]}$ if $S$ is a projective K3 surface and $n\leq 2b_{2,tr}+4$, where $b_{2,tr}$ is the second Betti number of $S$ minus its Picard number. We remark that here we indeed can replace the homological equivalence by the numerical equivalence since the standard conjecture in these two cases has been verified by Charles and Markman \cite{MR3040747}.

The main result of this paper is to prove the Beauville-Voisin conjecture \ref{conj:BeauvilleVoisin} for generalized
Kummer varieties.

\begin{thm}\label{thm:main2}
  Let $A$ be an abelian surface, $n\geq 1$ be a natural number.
  Denote by $K_n$ the generalized Kummer variety associated to $A$ (\cf Examples \ref{examples}). Consider any algebraic cycle $z\in \CH(K_n)_\Q$ which is a polynomial with rational coefficients of the first Chern classes of line bundles on $K_n$ and the Chern classes of the tangent bundle of $K_n$, then $z$ is numerically trivial if and only if $z$ is (rationally equivalent to) zero.
\end{thm}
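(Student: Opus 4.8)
The plan is to isolate an explicitly controlled subalgebra of $\CH^*(K_n)_\Q$ containing all the cycles in the statement, and to show that the cycle class map is injective on it. Write $R^*(K_n)\subseteq\CH^*(K_n)_\Q$ for the $\Q$-subalgebra generated by the first Chern classes of line bundles on $K_n$ together with the Chern classes $c_i(T_{K_n})$; the assertion is exactly that $\cl\colon R^*(K_n)\to H^*(K_n,\Q)$ is injective. Since $K_1$ is a projective K3 surface, the case $n=1$ is Theorem \ref{thm:K3}, so one may assume $n\ge 2$; it is natural to settle first Conjecture \ref{conj:Beauville1} for $K_n$, i.e.\ the injectivity on the subalgebra $\overline R^*(K_n)$ generated by divisor classes alone, and only afterwards feed in the $c_i(T_{K_n})$.

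\emph{Step 1: transport to the abelian surface.} The Hilbert--Chow morphism restricts to a crepant resolution $\rho\colon K_n\to A_0^{(n+1)}$ of $A_0^{(n+1)}=A_0^{n+1}/\mathfrak S_{n+1}$, where $A_0^{n+1}=\ker(A^{n+1}\to A)$ is an abelian variety of dimension $2n$. By a de Cataldo--Migliorini type decomposition of the Chow motive of $A^{[n+1]}$, compatible with the isotrivial fibration $A^{[n+1]}\to A$, one obtains a natural splitting of $\CH^*(K_n)_\Q$, realized by explicit algebraic correspondences and compatible with $\cl$, into a direct sum of (Tate twists of) the $\mathfrak S$-invariant parts $\CH^*(A^{(\lambda)})_\Q$ indexed by partitions $\lambda$ of $n+1$, the piece $\lambda=(1^{n+1})$ carrying the ``generic'' part and the others the exceptional contributions. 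Each such $\CH^*(A^m)_\Q$ carries Beauville's decomposition $\CH^*(A^m)_\Q=\bigoplus_s\CH^*_{(s)}(A^m)_\Q$, which is multiplicative, $\CH^i_{(s)}\cdot\CH^j_{(t)}\subseteq\CH^{i+j}_{(s+t)}$, with $\CH^1_{(0)}(A^m)=\mathrm{NS}(A^m)_\Q$, with $\CH^{2m}_{(0)}(A^m)_\Q=\Q\cdot[0_{A^m}]$ one-dimensional, and with $\CH^2_{(0)}(A)_\Q=\Q\cdot o_A$ for $o_A=[0_A]$.

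\emph{Step 2: locate the distinguished classes in Beauville-degree $0$.} One checks that all generators of $R^*(K_n)$ lie, through the splitting of Step 1, in the subalgebra generated by Beauville-degree-$0$ classes. For divisors this rests on Beauville's description of $\Pic(K_n)$ for $n\ge2$: $\mathrm{NS}(K_n)_\Q$ is spanned by classes symmetrized from $\mathrm{NS}(A)_\Q$ (which become symmetric divisors on $A_0^{n+1}$, hence lie in $\CH^1_{(0)}$) together with half the class $\delta$ of the exceptional divisor of $\rho$, whose contribution is concentrated on the partitions $\lambda\ne(1^{n+1})$ and is again of Beauville-degree $0$; multiplicativity then forces every polynomial in divisors into the Beauville-degree-$0$ part. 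For the tangent bundle one uses that the normal bundle of $K_n$ in $A^{[n+1]}$ is trivial, so $c(T_{K_n})=c(T_{A^{[n+1]}})|_{K_n}$, while $T_{A_0^{n+1}}$ is trivial, so that for $i\ge1$ the class $c_i(T_{K_n})$ restricts to $0$ on the locus of reduced subschemes and is therefore supported on the exceptional locus of $\rho$; via the standard description of $T_{A^{[n+1]}}$ through the universal subscheme one then writes $c_i(T_{K_n})$ as a universal polynomial in $\delta$ and in tautological classes pulled back and symmetrized from $A$, and reads off that these too sit in Beauville-degree $0$. Hence $R^*(K_n)$ maps into the subalgebra generated by the symmetrized $\mathrm{NS}(A)$-classes, $\delta$, and the canonical $0$-cycle $o_{K_n}$ attached to the origin of $A$.

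\emph{Step 3: conclude, and the main obstacle.} On the Beauville-degree-$0$ part numerical triviality forces rational triviality. In top degree $2n$ this is immediate, since the degree-$0$ part of $\CH_0(K_n)_\Q$ is one-dimensional, spanned by $o_{K_n}$ --- the exact analogue of the Beauville--Voisin canonical cycle of a K3 surface --- so a numerically trivial distinguished $0$-cycle vanishes. In intermediate degrees one reduces, via Step 1, to a statement about powers of $A$: any product of $\mathrm{NS}(A^m)_\Q$-classes lands in $\CH^*_{(0)}(A^m)$, and since on the surface $A$ the subring $\CH^0(A)_\Q\oplus\mathrm{NS}(A)_\Q\oplus\CH^2_{(0)}(A)_\Q$ of $\CH^*(A)_\Q$ maps isomorphically (via the degree map on $\CH^2_{(0)}(A)_\Q=\Q o_A$) onto its image in $H^*(A,\Q)$, the external products of such elements over the $m$ factors are detected by cohomology; hence the relevant divisor subalgebra on $A^m$ injects into $H^*(A^m,\Q)$, and likewise on the partition pieces involving $\delta$. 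The main obstacle is the compatibility welding Steps 1 and 2: the de Cataldo--Migliorini decomposition is not a priori a ring isomorphism, so one must either upgrade it to a multiplicative Chow--K\"unneth type splitting of $\CH^*(K_n)_\Q$ compatible with the Beauville grading on the abelian powers, or else compute the relevant products of distinguished classes directly on $K_n$ by intersection theory on the Hilbert scheme; and one must carry out carefully the computation of $c(T_{K_n})$ and of its behaviour under this (degree-shifting) decomposition, which is where the bulk of the work lies. By contrast the auxiliary injectivity statement on $A^m$ is elementary, resting only on the multiplicativity of Beauville's grading and on $\CH^2_{(0)}(A)_\Q$ being one-dimensional --- which is also why the argument is uniform in $n$, in contrast with the Hilbert square or cube of a K3 surface.
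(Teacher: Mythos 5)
Your overall architecture --- transport $z$ to powers of $A$ via a De Cataldo--Migliorini correspondence compatible with the fibration $A^{[n+1]}\to A$, express the transported classes as polynomials in divisor-type classes on abelian varieties, and conclude by injectivity of the cycle class map there --- is the same as the paper's. But Step 3 contains a genuine gap that breaks the proof. You claim that the injectivity on $A^m$ is ``elementary, resting only on the multiplicativity of Beauville's grading and on $\CH^2_{(0)}(A)_\Q$ being one-dimensional,'' via the observation that external products of classes in the subring $\Q\oplus NS(A)_\Q\oplus\Q\, o_A$ of each factor are detected by cohomology through K\"unneth. The problem is that the subalgebra of $\CH^*(A^m)_\Q$ generated by $NS(A^m)_\Q$ (equivalently, by symmetric line bundles on $A^m$) is \emph{not} contained in the span of such external products: $NS(A^m)_\Q$ contains correlation classes, i.e.\ pullbacks of polarizations under homomorphisms $A^m\to A$ such as $(x_1,\dots,x_m)\mapsto x_i-x_j$, and their products are not external products. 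Indeed the class of the big diagonal $\Delta_{ij}$ --- which inevitably appears when you push tautological classes of $A^{[n+1]}$ down to $A^{\miu}$, and which is proportional to $\phi^*(\theta^2)$ for $\phi$ the difference map --- already cannot be written as a sum of external products $\sum_k\alpha_k\times\beta_k$ in $\CH^*(A\times A)_\Q$: if it could, then $\Delta_{A*}$ would have finite-dimensional image on degree-zero $0$-cycles, contradicting Mumford's infinite-dimensionality of $\CH_0$ of an abelian surface. So your K\"unneth argument does not see the subalgebra you actually need, and the injectivity of $\cl$ on the subalgebra generated by symmetric divisors on a higher-dimensional abelian variety is precisely the (far from elementary) theorem of Moonen and O'Sullivan, which is the essential external input of the paper and is missing from your proposal. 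Multiplicativity of the Beauville grading only places these products in $\CH^*_{(0)}$; injectivity of $\cl$ on $\CH^*_{(0)}$ is Beauville's open conjecture, and one can only conclude on the distinguished subalgebra via Moonen--O'Sullivan.

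Two further points. First, the ``main obstacle'' you identify in Step 1 --- that the De Cataldo--Migliorini decomposition is not a ring map --- is real, but the paper sidesteps it without any multiplicative Chow--K\"unneth upgrade: it applies $E_{\miu*}$ to the \emph{whole} polynomial $\gamma$ at once and proves directly (by induction through nested Hilbert schemes and the Ellingsrud--G\"ottsche--Lehn formulas) that $E_{\miu*}(\gamma)$ is a polynomial in big diagonals and symmetric divisor classes on $A^{\miu}$; only the identity $\sum_{\miu}c_{\miu}\,{}^t\!E_{\miu}\circ E_{\miu}=\Delta_{A^{[n+1]}}$ is used, never compatibility with products. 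Second, after restricting to the fibre over $O_A$ the relevant abelian variety $B_{\miu}=\ker(s_{\miu}\colon A^{\miu}\to A)$ is disconnected, and one can only apply the abelian-variety input on the identity component; recovering the vanishing on the other components requires the torsion-translation argument (and the observation that translating $\gamma$ by an $(n+1)$-torsion point does not change $\gamma|_{K_n}$), which your sketch does not address.
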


There are two key ingredients in the proof of the above theorem: on the one hand, as in \cite{MR2435839}, the result of De Cataldo-Migliorini \cite{MR1919155} recalled in Section \ref{sect:DeCM} relates the Chow groups of $A^{[n]}$ to the Chow groups of various products of $A$. On the other hand, a recent result on algebraic cycles on abelian varieties due to Moonen \cite{MoonenNew} and O'Sullivan \cite{MR2795752}, which is explained in Section \ref{sect:MO}, allows us to upgrade a relation modulo numerical equivalence to a relation modulo rational equivalence.

\noindent{\textbf{Convention: }} Throughout this paper, we work over
the field of complex numbers. All Chow groups are with rational
coefficients $\CH\deff\CH\otimes \Q$. 
%We denote by $\h$ the contravariant functor from the category of smooth projective varieties to the category
%$\CHM$ of Chow motives with rational coefficients. 
If $A$ is an abelian variety, we denote by $O_A$ its origin and $\Pic^s(A)$ its group of symmetric line bundles. For any smooth projective surface $S$, we denote by $S^{[n]}$ the Hilbert scheme of subschemes of length $n$, which is a $2n$-dimensional smooth projective variety by \cite{MR0237496}. Finally, for an algebraic variety $X$, the  \emph{big diagonal $\Delta_{ij}$} in a self-product $X^n$ is the subvariety $\left\{(x_1, \cdots, x_n)\in X^n~|~~ x_i=x_j\right\}$.

\section{De Cataldo-Migliorini's result}\label{sect:DeCM}
As mentioned above, a crucial ingredient for the proof of Theorem
\ref{thm:main2} will be the following result due to De Cataldo and
Migliorini. We state their result in the form adapted to our
purpose.
% For its relation to the original formulation in
%\cite{MR1919155}, see Remark \ref{rmk:twoformulations}.

Let $S$ be a projective surface, $n\in \N_+$ and $\sP(n)$ be the set
of partitions of $n$. For any such partition $\miu=(\miu_1, \cdots,
\miu_l)$, we denote by $l_{\miu}:=l$ its length. Define
$S^{\miu}:=S^{l_{\miu}}=\underbrace{S\times\cdots\times
S}_{l_{\miu}}$, and also a natural morphism from it to the symmetric
product:
\begin{eqnarray*}
   S^{\miu}  & \to & S^{(n)}\\
   \left(x_1,\cdots, x_l\right) & \mapsto& \miu_1x_1+\cdots+\miu_lx_l.
\end{eqnarray*}
Now define $E_{\miu}\deff \left(S^{[n]}\times_{S^{(n)}}S^{\miu}\right)_{red}$
to be the reduced incidence variety inside $S^{[n]}\times S^{\miu}$. Then
$E_{\miu}$ can be viewed as a correspondence from $S^{[n]}$ to
$S^{\miu}$, and we will write ${}^tE_{\miu}$ for the
\emph{transpose} correspondence, namely the correspondence from
$S^{\miu}$ to $S^{[n]}$ defined by the same subvariety $E_{\miu}$ in
the product. Let $\miu=(\miu_1, \cdots,
\miu_l)=1^{a_1}2^{a_2}\cdots n^{a_n}$ be a partition of $n$, we define $m_{\miu}:=(-1)^{n-l}\prod_{j=1}^{l}\miu_j$ and $c_{\miu}:= \frac{1}{m_{\miu}}\frac{1}{a_1!\cdots a_n!}$.

\begin{thm}[De Cataldo-Migliorini \cite{MR1919155}]\label{thm:CataldoMigliorini}
Let $S$ be a projective surface, $n\in \N_+$. For each $\miu\in
\sP(n)$, let $E_{\miu}$ and ${}^tE_{\miu}$ be the correspondences
defined above. Then the sum of the compositions
$$\sum_{\miu\in \sP(n)}c_{\miu}~{}^t\!E_{\miu}\circ E_{\miu}=\Delta_{S^{[n]}}$$
is the identity correspondence of $S^{[n]}$, modulo rational
equivalence. In particular,  $$\sum_{\miu\in \sP(n)}c_{\miu}~E_{\miu}^*\circ E_{\miu *}=\id_{\CH(S^{[n]})}\colon \CH(S^{[n]}) \to \CH(S^{[n]}).$$
\end{thm}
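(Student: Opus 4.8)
The plan is to localize the problem along the Hilbert--Chow morphism $\pi\colon S^{[n]}\to S^{(n)}$, exploiting that $\pi$ is \emph{semismall}. Stratify $S^{(n)}=\bigsqcup_{\mu\in\sP(n)}S^{(n)}_{\mu}$ by partition type; since $\pi^{-1}\!\left(S^{(n)}_{\mu}\right)$ has dimension $n+l_{\mu}$ whereas $\codim_{S^{(n)}}S^{(n)}_{\mu}=2\left(n-l_{\mu}\right)$, the fibres of $\pi$ over $S^{(n)}_{\mu}$ have exactly half the codimension of that stratum, so $\pi$ is semismall and every stratum is relevant. Order $\sP(n)$ by $\mu\leq\nu$ iff $\overline{S^{(n)}_{\mu}}\subseteq\overline{S^{(n)}_{\nu}}$, so that $(1^{n})$ (distinct points) is the maximum and $(n)$ the minimum. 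Put $p_{\mu}\deff c_{\mu}\,{}^{t}E_{\mu}\circ E_{\mu}\in\CH^{2n}\!\left(S^{[n]}\times S^{[n]}\right)$ and $D\deff\Delta_{S^{[n]}}-\sum_{\mu\in\sP(n)}p_{\mu}$. Each summand of $D$ is supported on $S^{[n]}\times_{S^{(n)}}S^{[n]}$, so it suffices to prove $D=0$; the displayed operator identity on $\CH\!\left(S^{[n]}\right)$ then follows by making the equality $\Delta_{S^{[n]}}=\sum_{\mu}p_{\mu}$ act on cycles.

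I would prove $D=0$ by a Noetherian downward induction over the finite poset $\sP(n)$, whose content at each step is a local computation over one stratum. For the base, over the open stratum $\pi$ is an isomorphism and $S^{(1^{n})}=S^{n}\to S^{(n)}$ is the $\mathfrak{S}_{n}$-quotient, so a transverse computation gives ${}^{t}E_{(1^{n})}\circ E_{(1^{n})}=n!\cdot\Delta_{S^{[n]}}$ over $\pi^{-1}\!\left(S^{(n)}_{(1^{n})}\right)$; as $c_{(1^{n})}=1/n!$, the cycle $D$ vanishes after restriction to the locus over the open stratum, hence is supported over $\bigcup_{\mu<(1^{n})}\overline{S^{(n)}_{\mu}}$. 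For the inductive step, let $\mu$ be maximal among the partitions over which $D$ might still be supported and set $Z_{\mu}\deff\pi^{-1}\!\left(S^{(n)}_{\mu}\right)\times_{S^{(n)}_{\mu}}\pi^{-1}\!\left(S^{(n)}_{\mu}\right)$; because the fibres of $\pi$ over $S^{(n)}_{\mu}$ are the irreducible punctual Hilbert schemes $\prod_{i}\mathrm{Hilb}^{\mu_{i}}_{0}\!\left(\C^{2}\right)$, the variety $Z_{\mu}$ is irreducible of dimension $2n$. By the localization sequence together with the earlier steps, the restriction of $D$ to the open subset $U_{\mu}\subseteq S^{[n]}\times S^{[n]}$ lying over the complement of the strata strictly below $\mu$ is a single multiple $\kappa_{\mu}\cdot\left[\,\overline{Z_{\mu}}\cap U_{\mu}\,\right]$ of a generator of $\CH_{2n}\!\left(\overline{Z_{\mu}}\cap U_{\mu}\right)\cong\Q$. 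One computes $\kappa_{\mu}$ from the \'etale-local description of $\pi$ near $S^{(n)}_{\mu}$ --- a product of punctual Hilbert schemes over an \'etale cover of the small stratum, compatible with the $E_{\nu}$ for $\nu\geq\mu$ --- and obtains $\kappa_{\mu}=0$: the relevant $p_{\nu}$ combine correctly exactly because $c_{\nu}=\frac{1}{m_{\nu}}\frac{1}{a_{1}!\cdots a_{n}!}$ is rigged to cancel the two sources of multiplicity, the factor $a_{1}!\cdots a_{n}!$ from permuting equal parts and the factor $m_{\mu}=(-1)^{n-l_{\mu}}\prod_{j}\mu_{j}$ assembled from the signed self-intersection numbers $(-1)^{\mu_{j}-1}\mu_{j}$ of the punctual Hilbert schemes. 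Thus $D$ becomes supported over strictly smaller strata, and the induction ends at $(n)$ with $D=0$.

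The main obstacle is precisely this last computation: since the fibres of $\pi$ are positive-dimensional, the composition ${}^{t}E_{\mu}\circ E_{\mu}$ is highly non-transverse, and the excess-intersection contribution of the punctual Hilbert schemes has to be controlled. This rests on (i) the \'etale-local model of $S^{[n]}\to S^{(n)}$ near each stratum as a product of punctual models, made compatible with all the incidence correspondences, and (ii) the evaluation of the self-intersection number of $\mathrm{Hilb}^{k}_{0}\!\left(\C^{2}\right)$ inside its square, which produces the constant $(-1)^{k-1}k$ underlying $m_{\mu}$ (the same constant appearing in Nakajima's commutation relations). Packaged properly, (i) and (ii) are exactly the content of the theory of Chow motives of semismall maps: they exhibit each $p_{\mu}$ as an idempotent cutting a (Tate-twisted) motive of a symmetric quotient of $S^{l_{\mu}}$ out of $\h\!\left(S^{[n]}\right)$, and the identity $\sum_{\mu}p_{\mu}=\Delta_{S^{[n]}}$ says these idempotents are orthogonal and exhaust $\h\!\left(S^{[n]}\right)$ --- the exhaustion being, if one wants an independent check, confirmed on cohomology by G\"ottsche's Betti-number formula.
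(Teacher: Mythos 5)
First, a point of reference: the paper does not prove this statement. Theorem \ref{thm:CataldoMigliorini} is imported verbatim from De Cataldo--Migliorini \cite{MR1919155}, so there is no internal proof to compare with, and your attempt has to be measured against the original source. Your framework is in fact the same as theirs: semismallness of the Hilbert--Chow morphism, the stratification of $S^{(n)}$ by partition type, irreducibility of the $2n$-dimensional components $\overline{Z_\mu}$ of $S^{[n]}\times_{S^{(n)}}S^{[n]}$ via Brian\c{c}on's theorem, and a descending induction over the poset $\sP(n)$. The base case over the open stratum is correct, as is the reduction of the cycle identity to the vanishing of the coefficients $\kappa_\mu$.

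The gap is the inductive step, which is where the entire content of the theorem lives, and your description of it points at the wrong mechanism. Writing ${}^tE_\nu\circ E_\nu=\sum_{\mu\leq\nu}e_{\nu\mu}\,\overline{Z_\mu}$, the coefficient you must kill is $\kappa_\mu=-\sum_{\nu\geq\mu}c_\nu e_{\nu\mu}$, a sum over \emph{all} partitions $\nu$ refining $\mu$ (for $\mu=(n)$, over all of $\sP(n)$). Your cancellation argument accounts only for the diagonal term $\nu=\mu$, and even that is misstated: with the paper's normalization, $c_\mu\cdot\bigl(a_1!\cdots a_n!\cdot m_\mu\bigr)=1$, not $0$, so the diagonal term alone cannot vanish, and the off-diagonal multiplicities $e_{\nu\mu}$ for $\nu>\mu$ --- which you never compute or even name --- are indispensable. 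Moreover, the signed numbers $(-1)^{k-1}k$ cannot be the multiplicities occurring in ${}^tE_\mu\circ E_\mu$: every component of $E_\mu\times_{S^\mu}E_\mu$ has dimension exactly $2n$, so that intersection is dimensionally proper and all its multiplicities are positive. The self-intersections of the punctual Hilbert schemes enter through the \emph{other} composition $E_\mu\circ{}^tE_\mu$ (a self-correspondence of $S^\mu$), where the fibre product over $S^{[n]}$ has excess dimension $n-l_\mu$ and an excess-bundle computation is needed; it is the interplay of both compositions --- the triangular ``Gram matrix'' of all $E_\mu\circ{}^tE_\nu$ --- that De Cataldo--Migliorini actually exploit. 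Finally, outsourcing the computation to ``the theory of Chow motives of semismall maps'' is circular here, since this theorem \emph{is} that theory applied to the Hilbert--Chow morphism, with the nondegeneracy of the refined intersection forms (equivalently the evaluation of the self-intersection of $\mathrm{Hilb}^k_0(\C^2)$) as its essential input; and checking exhaustion against G\"ottsche's Betti-number formula only controls the identity modulo homological equivalence, not in $\CH$.
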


%\begin{rmk}\label{rmk:twoformulations}
%In \cite{MR1919155}, they actually establish the following canonical
%isomorphism between the Chow motive of $S^{[n]}$ and the direct sum,
%over all $\miu\in \sP(n)$, of the Chow motives of $S^{\left(\miu\right)}$,
%with appropriate Tate twists:
%$$\h\left(S^{[n]}\right)\isom \bigoplus_{\miu\in
%\sP(n)}\h\left(S^{\left(\miu\right)}\right)\left(n-l_{\miu}\right),$$ where $S^{\left(\miu\right)}$ is a
%product of some symmetric products of $S$; more precisely, it is
%defined as the quotient
%$S^{\miu}/\left(\mathfrak{S}_{a_1}\times\cdots\times\mathfrak{S}_{a_n}\right)$
%with
%$\miu=(\underbrace{1,\cdots,1}_{a_1},\underbrace{2,\cdots,2}_{a_2},\cdots)$.
%However, it is clear that the Chow motive of $S^{\left(\miu\right)}$ is
%canonically a direct summand of the Chow motive of the product
%$S^{l_{\miu}}=:S^{\miu}$. Now the correspondence $E_{\miu}$,
%understood as a morphism between Chow motives, is simply the
%composition of the projection, via the above isomorphism, from
%$\h\left(S^{[n]}\right)$ onto the direct summand $\h\left(S^{\left(\miu\right)}\right)$ followed
%by the inclusion into $\h\left(S^{\miu}\right)$; similarly, ${}^tE_{\miu}$ is
%just the composition of the projection $\h\left(S^{\miu}\right)\surj
%\h\left(S^{\left({\miu}\right)}\right)$ followed by the inclusion, via the above
%isomorphism, into $\h\left(S^{[n]}\right)$, \cf \cite{MR1919155} and
%\cite{MR2435839} for more details.
%\end{rmk}

Return to the case where $S=A$ is an abelian surface. We view
$A^{[n+1]}$ as a variety over $A$ by the natural summation morphism
$s\colon A^{[n+1]}\to A$.
 Similarly, for each $\miu\in\sP(n+1)$ of
length $l$, $A^{\miu}$ also admits a natural morphism to $A$,
namely, the \emph{weighted sum}:
\begin{eqnarray*}
s_{\miu}\colon A^{\miu} &\to & A\\
\left(x_1, \cdots, x_{l}\right)&\mapsto& \miu_1x_1+\cdots+\miu_lx_l.
\end{eqnarray*}

By definition, the
correspondences $E_{\miu}$, ${}^tE_{\miu}$ are compatible with
morphisms $s$ and $s_{\miu}$ to $A$, \ie the following diagram
commutes:
\begin{displaymath}
  \xymatrix{
   &E_{\miu}\ar[dl] \ar[dr]\ar[dd]^{\pi_{\miu}}&\\
   A^{[n+1]} \ar[dr]_{s} & &A^{\miu}\ar[dl]^{s_{\miu}}\\
   &A&.
  }
\end{displaymath}

We point out that the three morphisms to $A$ are all isotrivial fibrations: they become products after the base change $A\lra{\cdot n+1} A$ given by multiplication by $n+1$. Now let us take their fibres over the origin of $A$, or equivalently,  apply the base change $i: \Spec(\C)=O_A\inj A$ to the above commutative diagram, we obtain the following correspondence, where $K_n:=s^{-1}\left(O_A\right)$ is the generalized Kummer variety, $B_{\miu}$ is the possibly non-connected abelian variety $B_{\miu}\deff\ker\left(s_{\miu}\colon A^{\miu}\to A\right)$, and $\Gamma_{\miu}:=\pi_{\miu}^{-1}\left(O_A\right)$.

\begin{displaymath}
  \xymatrix{
   &\Gamma_{\miu}\ar[dl] \ar[dr]&\\
   K_n \ar[dr] & &B_{\miu}\ar[dl]\\
   &O_A=\Spec(\C)&.
  }
\end{displaymath}

In the sequel, we sometimes view $E_{\miu}$ simply as an algebraic cycle in $\CH\left(A^{[n+1]}\times A^{\miu}\right)$ and also by definition $\Gamma_{\miu}=i^!\left(E_{\miu}\right)\in \CH\left(K_n\times B^{\miu}\right)$, where $i^!$ is the \emph{refined Gysin map} defined in \cite[Chapter 6]{MR1644323}. We need the following standard fact in intersection theory.

\begin{lemma}\label{lemma: intersection}
For any $\gamma\in \CH\left(A^{[n+1]}\right)$, we have $$\Gamma_{\miu *}\left(\gamma|_{K_n}\right)=\left(E_{\miu *}(\gamma)\right)|_{B_{\miu}} ~~\text{in}~~\CH\left(B_{\miu}\right).$$
Similarly, for any $\beta\in \CH\left(A^{\miu}\right)$, we have $$\Gamma_{\miu}^*\left(\beta|_{B_{\miu}}\right)=\left(E_{\miu}^*(\beta)\right)|_{K_n} ~~\text{in}~~\CH\left(K_{n}\right).$$
\end{lemma}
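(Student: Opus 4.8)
The plan is to obtain both identities as a formal consequence of the functoriality of refined Gysin homomorphisms \cite[Chapter 6]{MR1644323}, using that $i\colon O_A\inj A$ is a regular closed embedding (of codimension $2$) and that the three structure maps $s$, $s_{\miu}$, $\pi_{\miu}$ to $A$ are flat. It suffices to treat the first identity, the second being symmetric under exchanging the roles of $\left(A^{[n+1]},K_n\right)$ and $\left(A^{\miu},B_{\miu}\right)$. First I would unwind the definitions. Set $W\deff A^{[n+1]}\times A^{\miu}$, with projections $p_1,p_2$; since $W$ is smooth, $E_{\miu *}(\gamma)=p_{2*}\!\left(p_1^{*}\gamma\cdot [E_{\miu}]\right)$, and by definition $(E_{\miu *}\gamma)|_{B_{\miu}}=i^{!}\!\left(E_{\miu *}\gamma\right)$, where $i^{!}$ is attached to the fibre square with vertical map $s_{\miu}$. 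The crucial elementary observation is that $s_{\miu}\circ p_2\colon W\to A$ is flat, so that its fibre $W'\deff(s_{\miu}\circ p_2)^{-1}(O_A)=A^{[n+1]}\times B_{\miu}$ is again smooth ($B_{\miu}$ being a group scheme over $\C$, hence smooth), the base-changed embedding $l\colon W'\inj W$ is regular of codimension $2$, the square is Tor-independent, and therefore $i^{!}$ acting on $\CH(W)$ coincides with the Gysin pullback $l^{*}$; in particular it is a ring homomorphism from $\CH(W)$ to $\CH(W')$.

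The computation then runs as follows. Using compatibility of refined Gysin maps with proper pushforward along $W\xrightarrow{p_2}A^{\miu}\xrightarrow{s_{\miu}}A$ \cite[Theorem 6.2(a)]{MR1644323} and then the multiplicativity just noted,
$$i^{!}\!\left(E_{\miu *}\gamma\right)=\bar p_{2*}\,l^{*}\!\left(p_1^{*}\gamma\cdot [E_{\miu}]\right)=\bar p_{2*}\!\left(l^{*}p_1^{*}\gamma\cdot l^{*}[E_{\miu}]\right),$$
where $\bar p_2\colon W'\to B_{\miu}$ is induced by $p_2$. Now $l^{*}[E_{\miu}]=[\Gamma_{\miu}]$: indeed $E_{\miu}\cap W'=\Gamma_{\miu}$ as subschemes (on $E_{\miu}$ one has $s\circ p_1=s_{\miu}\circ p_2=\pi_{\miu}$, so the condition $s_{\miu}\circ p_2=O_A$ cuts out $\pi_{\miu}^{-1}(O_A)=\Gamma_{\miu}$, which moreover lies in $K_n\times B_{\miu}$), and this is precisely the convention $\Gamma_{\miu}=i^{!}(E_{\miu})$ adopted in the text. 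On the other hand $l^{*}p_1^{*}\gamma=\mathrm{pr}_1^{*}\gamma$ on $W'=A^{[n+1]}\times B_{\miu}$, and restricting further to the subvariety $K_n\times B_{\miu}$ (base change of flat pullback along a closed embedding) it becomes $\mathrm{pr}_1^{*}\!\left(\gamma|_{K_n}\right)$. Hence the displayed expression equals $\bar p_{2*}\!\left(\mathrm{pr}_1^{*}(\gamma|_{K_n})\cdot [\Gamma_{\miu}]\right)$, which is exactly $\Gamma_{\miu *}(\gamma|_{K_n})$ by the definition of the correspondence $\Gamma_{\miu}\in\CH\!\left(K_n\times B_{\miu}\right)$. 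This would complete the first identity; the second is obtained verbatim after interchanging $p_1\leftrightarrow p_2$ and the two target varieties.

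I do not expect a genuine obstacle here — the author rightly calls it standard — and the only points deserving care are the two flatness remarks that turn the refined Gysin maps into honest restrictions along regular embeddings of smooth varieties, and the identification $i^{!}[E_{\miu}]=[\Gamma_{\miu}]$, which is forced by the chosen convention. If one wished to bypass Fulton's bookkeeping entirely, an alternative is to trivialise: pulling back along the finite flat surjective isogeny $A\xrightarrow{\cdot(n+1)}A$ simultaneously turns $s$, $s_{\miu}$, $\pi_{\miu}$ into the projections $K_n\times A\to A$, $B_{\miu}\times A\to A$, $\Gamma_{\miu}\times A\to A$ (using the translation action of $A$, e.g. $\left(Z,a\right)\mapsto t_{a}(Z)$ identifies $K_n\times A$ with $A^{[n+1]}\times_{A}A$), and turns the correspondence $E_{\miu}$ into the split correspondence $\Gamma_{\miu}\times\Delta_{A}$; since $\cdot(n+1)$ is flat and surjective the induced pullbacks on Chow groups are injective, so it is enough to verify the identity after this base change, where it reduces to the elementary compatibility of flat pullback, proper pushforward and intersection product for products \cite[\S1.7]{MR1644323}.
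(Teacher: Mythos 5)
Your argument is correct and follows essentially the same route as the paper: both proofs run the computation through Fulton's refined Gysin formalism, using the compatibilities of \cite[Theorem 6.2]{MR1644323} with proper pushforward and flat pullback, the multiplicativity of the Gysin map, and the convention $\Gamma_{\miu}=i^!\left(E_{\miu}\right)$. The only (harmless) difference is organizational: you first pass to the smooth fibre $W'=A^{[n+1]}\times B_{\miu}$ of the flat map $s_{\miu}\circ p_2$ so that $i^!$ becomes an honest ring homomorphism $l^*$ between smooth varieties, whereas the paper manipulates $i^!$ directly across its cube of cartesian squares; your closing remark about trivializing by the isogeny $\cdot(n+1)$ is a genuinely different (and viable) shortcut, but it is only sketched and not needed.
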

\begin{proof}
All squares are cartesian in the following commutative diagram. 
\begin{displaymath}
\xymatrix{
K_n\times B_{\miu}\ar[r]^{q'} \ar[d]_{p'} \ar@{^{(}->}[drr] & B_{\miu} \ar[d] \ar@{^{(}->}[drr] & & \\
K_n \ar[r] \ar@{^{(}->}[drr] & O_A \ar@{^{(}->}[drr]|(0.3){i} & A^{[n+1]}\times A^{\miu}\ar[d]^(0.35){p} \ar[r]_(0.6){q}& A^{\miu}\ar[d]^{s_{\miu}}\\
&&A^{[n+1]}\ar[r]_{s} & A.\\
}
\end{displaymath}
Now for any $\gamma\in \CH\left(A^{[n+1]}\right)$, we have
\begin{eqnarray*}
\Gamma_{\miu *}\left(\gamma|_{K_n}\right)&=& \Gamma_{\miu *}\left( i^!(\gamma)\right) ~~~~~\text{(by \cite[Theorem 6.2(c)]{MR1644323}, as $s$ is isotrivial)}\\
&=& q'_*\left(p'^*\left( i^!(\gamma)\right) \cdot i^!\left(E_{\miu}\right)\right)\\
&=& q'_*\left(i^!\left(p^*(\gamma)\right) \cdot i^!\left(E_{\miu}\right)\right) ~~~~~\text{(by \cite[Theorem 6.2(b)]{MR1644323})}\\
&=& q'_*\left(i^!\left(p^*(\gamma)\cdot E_{\miu}\right)\right)\\
&=& i^!\left(q_*\left(p^*(\gamma)\cdot E_{\miu}\right)\right) ~~~~~\text{(by \cite[Theorem 6.2(a)]{MR1644323})}\\
&=& i^!\left(E_{\miu *}(\gamma)\right)\\
&=& \left(E_{\miu *}(\gamma)\right)|_{B_{\miu}}  ~~~~~\text{(by \cite[Theorem 6.2(c)]{MR1644323}, as $s_{\miu}$ is isotrivial)}\\
\end{eqnarray*}
The proof of the second equality is completely analogous.
\end{proof}

Theorem \ref{thm:CataldoMigliorini} together with Lemma \ref{lemma: intersection} implies the following

\begin{cor}\label{cor:CataldoMigliorini}
For each $\miu\in \sP(n+1)$, let $\Gamma_{\miu}$ be the
correspondences between $K_n$ and $B_{\miu}$ defined above. Then  for any $\gamma\in \CH\left(A^{[n+1]}\right)$, we have $$\sum_{\miu\in \sP(n+1)}c_{\miu}\Gamma_{\miu}^*\circ \Gamma_{\miu *}(\gamma|_{K_n})=\gamma|_{K_n} ~~\text{in} ~\CH(K_n),$$
where for a partition $\miu=(\miu_1, \cdots,
\miu_l)=1^{a_1}2^{a_2}\cdots (n+1)^{a_{n+1}}\in \sP(n+1)$, the constant $c_{\miu}$ is defined as $\frac{1}{(-1)^{n+1-l}\prod_{j=1}^{l}\miu_j}\cdot\frac{1}{a_1!\cdots a_{n+1}!}$.
\end{cor}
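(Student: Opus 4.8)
The plan is to deduce the corollary directly from Theorem \ref{thm:CataldoMigliorini} by restricting the De Cataldo--Migliorini decomposition of the diagonal to the fibre $K_n$ of $s\colon A^{[n+1]}\to A$ over $O_A$, and then rewriting the restricted operators by means of Lemma \ref{lemma: intersection}. First I would apply Theorem \ref{thm:CataldoMigliorini} with $S=A$ and with $n$ replaced by $n+1$: for any $\gamma\in\CH(A^{[n+1]})$ we get
$$\sum_{\miu\in\sP(n+1)}c_{\miu}\,E_{\miu}^*\circ E_{\miu *}(\gamma)=\gamma\qquad\text{in }\CH(A^{[n+1]}),$$
where the constants $c_{\miu}$ are, tautologically, the ones recalled in the statement (the length $l$, the multiplicities $a_1,\dots,a_{n+1}$ and the sign $(-1)^{n+1-l}$ now referring to partitions of $n+1$). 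Restricting this equality along the inclusion $K_n\hookrightarrow A^{[n+1]}$ — that is, applying the refined Gysin map $i^{!}$ for $i\colon O_A\hookrightarrow A$, which is legitimate on rational equivalence classes precisely because $s$ is isotrivial — yields
$$\sum_{\miu\in\sP(n+1)}c_{\miu}\,\bigl(E_{\miu}^*\circ E_{\miu *}(\gamma)\bigr)\big|_{K_n}=\gamma|_{K_n}\qquad\text{in }\CH(K_n).$$

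The remaining point is to identify each summand on the left with $\Gamma_{\miu}^*\circ\Gamma_{\miu *}(\gamma|_{K_n})$. Set $\beta\deff E_{\miu *}(\gamma)\in\CH(A^{\miu})$. The second equality of Lemma \ref{lemma: intersection} gives $\bigl(E_{\miu}^*(\beta)\bigr)\big|_{K_n}=\Gamma_{\miu}^*\bigl(\beta|_{B_{\miu}}\bigr)$, while the first equality of the same lemma gives $\beta|_{B_{\miu}}=\bigl(E_{\miu *}(\gamma)\bigr)\big|_{B_{\miu}}=\Gamma_{\miu *}(\gamma|_{K_n})$. Composing, $\bigl(E_{\miu}^*\circ E_{\miu *}(\gamma)\bigr)\big|_{K_n}=\Gamma_{\miu}^*\circ\Gamma_{\miu *}(\gamma|_{K_n})$, and substituting into the previous display gives exactly the asserted identity in $\CH(K_n)$.

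I do not expect a genuine obstacle here: the statement is a formal consequence of Theorem \ref{thm:CataldoMigliorini} and Lemma \ref{lemma: intersection}, and the only things needing a moment's care are bookkeeping — verifying that the $c_{\miu}$ appearing here are literally the specialization of those in Theorem \ref{thm:CataldoMigliorini} to $\sP(n+1)$, and keeping track that the symbols $-|_{K_n}$ and $-|_{B_{\miu}}$ mean the refined Gysin pullbacks along $O_A\hookrightarrow A$, so that restricting an identity of cycle classes from $A^{[n+1]}$ to $K_n$ (and from $A^{\miu}$ to $B_{\miu}$) is well defined. Both of these are already set up in the preceding discussion, so the corollary follows.
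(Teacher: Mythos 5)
Your argument is correct and is exactly the deduction the paper intends: the paper gives no separate proof of the corollary beyond asserting that it follows from Theorem \ref{thm:CataldoMigliorini} together with Lemma \ref{lemma: intersection}, and your write-up — restricting the diagonal decomposition along $i^!$ and then converting each summand via the two equalities of the lemma — is precisely that deduction, with the constants $c_{\miu}$ correctly specialized to partitions of $n+1$.
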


For later use, we now describe $B_{\miu}$. Let
$d:=\gcd\left(\miu_1,\cdots,\miu_l\right)$, then $B_{\miu}$ has $d^4$
isomorphic connected components. We denote by $B^0_{\miu}$ the
identity component, which is a connected abelian variety; and the other components are its torsion
translations. More precisely, define the \emph{weighted sum} homomorphism
\begin{eqnarray*}
s_{\miu}\colon \Z^{\oplus l_{\miu}} &\to & \Z\\
\left(m_1, \cdots, m_{l}\right)&\mapsto& \miu_1m_1+\cdots+\miu_lm_l,
\end{eqnarray*}
whose image is clearly $d\Z$. Let $U$ be the kernel of $s_{\miu}$, which is a free abelian group of rank $l_{\miu}-1$. Define the \emph{reduced weighted sum}
\begin{eqnarray*}
\bar s_{\miu}\colon \Z^{\oplus l_{\miu}} &\to & \Z\\
\left(m_1, \cdots, m_{l}\right)&\mapsto& \frac{\miu_1}{d}m_1+\cdots+\frac{\miu_l}{d}m_l.
\end{eqnarray*}
Then we have a short exact sequence of free abelian groups
\begin{equation}\label{eqn:ses1}
  0\to U\to \Z^{\oplus l_{\miu}} \lra{\bar s_{\miu}}  \Z\to 0.
\end{equation}
By tensoring with $A$, we obtain a short exact sequence of abelian varieties
\begin{equation}\label{eqn:ses2}
  0\to B^0_{\miu}\to A^{\miu} \lra{\bar s_{\miu}}  A\to 0.
\end{equation}

Since the short exact sequence (\ref{eqn:ses1}) splits, so does the short exact sequence (\ref{eqn:ses2}):
$B^0_{\miu}$ is a direct summand of $A^{\miu}$, thus we can choose a
projection $p_{\miu}\colon A^{\miu}\surj B^0_{\miu}$ such that
$p_{\miu}\circ i_{\miu}=\id_{B^0_{\miu}}$, where $i_{\miu}\colon
B^0_{\miu}\inj A^{\miu}$ is the natural inclusion.

Denoting $A[d]$ for the set of $d$-torsion points of $A$, we have
$$B_{\miu}=\bigsqcup_{t\in A[d]}B^t_{\miu},$$
where $B^t_{\miu}\deff \left\{(x_1, \cdots, x_l)\in A^{\miu}~|~\sum_{i=1}^{l}\frac{\miu_i}{d}x_i=t\in A\right\}$.

Now we specify the way that we view $B^t_{\miu}$ as a torsion translation of $B^0_{\miu}$. Since $d$ is the greatest common divisor of ${\miu}_1, \cdots, {\miu}_l$, it divides $n+1$. We choose $t'\in A[n+1]$ such that $\frac{n+1}{d}\cdot t'=t$ in $A$. Then the torsion translation on $A$ by $t'$ will induce some `torsion translation automorphism' $\tau_{t'} \deff (t', \cdots, t')$ on $A^{[n+1]}$
\begin{eqnarray*}
  \tau_{t'}: A^{[n+1]} & \to & A^{[n+1]}\\
  z & \mapsto & z+t',
\end{eqnarray*}
(\eg when $z$ is a reduced subscheme of length $n+1$ given by  $(x_1, \cdots, x_{n+1})$ with $x_j$'s pairwise distinct, it is mapped to $z+t':=(x_1+t', \cdots, x_{n+1}+t')$);
as well as on $A^{\miu}$
\begin{eqnarray*}
  \tau_{t'}: A^{\miu} &\to& A^{\miu}\\
  (x_1, \cdots, x_l)&\mapsto& (x_1+t', \cdots, x_l+t').
\end{eqnarray*}
These actions are compatible: we have the following commutative diagram with actions:
\begin{displaymath}
  \xymatrix{
   &\tau_{t'} \circlearrowright E_{\miu}\ar[dl] \ar[dr]&\\
   \tau_{t'} \circlearrowright A^{[n+1]} \ar[dr]_{s} & &A^{\miu} \circlearrowleft\tau_{t'}\ar[dl]^{s_{\miu}}\\
   &\id \circlearrowright A&
  }
\end{displaymath}
Moreover, the action of $\tau_{t'}$ on $A^{\miu}$  translates $B^0_{\miu}$ isomorphically to $B^t_{\miu}$.

\section{Result of Moonen and O'Sullivan}\label{sect:MO}
In this section,  $A$ is an abelian variety of dimension $g$. For any $m\in \Z$, let $\m$ be the
endomorphism of $A$ defined by the multiplication by $m$.  To motivate the result of Moonen and O'Sullivan,  let us firstly recall the Beauville conjectures for algebraic cycles on abelian varieties. In \cite{MR726428} and \cite{MR826463},  Beauville investigates the Fourier transformation between the Chow rings of $A$ and its dual abelian variety $\hat A$ and establishes the following:

\begin{thm}[Beauville decomposition]\label{thm:Beauville}
Let $A$ be a $g$-dimensional abelian variety.\\
 $(i)$ For any $0\leq
i\leq g$, there exists a direct-sum decomposition
$$\CH^i(A)=\bigoplus_{s=i-g}^i\CH^i_{(s)}(A),$$ where $\CH^i_{(s)}(A)\deff\left\{z\in \CH^i(A)~|~\m^*z=m^{2i-s}z , \forall m\in \Z
\right\}$.\\
$(ii)$ This decomposition is functorial: Let $B$ be another abelian
variety of dimension $(g+c)$ and $f\colon A\to B$ be a homomorphism
of abelian varieties. Then for any $i$,
$$f^*\left(\CH^i_{(s)}(B)\right)\subset \CH^i_{(s)}(A);$$
$$f_*\left(\CH^i_{(s)}(A)\right)\subset \CH^{i+c}_{(s)}(B).$$
$(iii)$ The intersection product respects the grading: $\CH^i_{(s)}(A)\cdot \CH^j_{(t)}(A)\subset \CH^{i+j}_{(s+t)}(A)$.
\end{thm}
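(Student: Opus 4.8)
The natural approach, following Beauville \cite{MR726428}, is to exploit the Fourier transform on the Chow ring of an abelian variety; parts (ii) and (iii) will then be formal consequences of (i). Let $\hat A$ be the dual abelian variety, $\mathcal P$ the Poincar\'e line bundle on $A\times\hat A$, normalized so that $\mathcal P|_{\{O_A\}\times\hat A}$ and $\mathcal P|_{A\times\{O_{\hat A}\}}$ are trivial, and set $\ell:=c_1(\mathcal P)\in\CH^1(A\times\hat A)$. First I would introduce the Fourier transform
$$\mathcal F_A\colon\CH(A)\to\CH(\hat A),\qquad \mathcal F_A(x)=p_{\hat A*}\bigl(p_A^*x\cdot e^{\ell}\bigr),$$
which does not respect the codimension grading, together with the analogous $\mathcal F_{\hat A}$ (using $\hat{\hat A}=A$). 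The two properties of $\mathcal F_A$ that drive everything are: (a) Fourier inversion, $\mathcal F_{\hat A}\circ\mathcal F_A=(-1)^g(-1_A)^*$, so that $\mathcal F_A$ is a $\Q$-linear isomorphism; and (b) compatibility with the multiplication maps: since the dual isogeny of $\m_A$ is $\m_{\hat A}$, one has $(\m_A\times\id)^*\mathcal P\cong(\id\times\m_{\hat A})^*\mathcal P$, and base change then gives $\mathcal F_A\circ\m_{A*}=\m_{\hat A}^*\circ\mathcal F_A$ and $\mathcal F_A\circ\m_A^*=\m_{\hat A*}\circ\mathcal F_A$. Property (a) is the main computational input: composing the two Fourier kernels over $A\times A$ and applying the see-saw principle reduces it to evaluating $p_{A*}(e^{\ell})$, which by Grothendieck--Riemann--Roch — it is the Chern character of the Fourier--Mukai transform of $\mathcal O$, and the Todd class of an abelian variety is trivial — equals $(-1)^g[O_A]$; one then identifies the resulting kernel with $(-1)^g$ times the graph of $-1_A$.

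For (i), fix $p$. The operators $\m^*$ (for $m\ge 1$) on $\CH^p(A)$ commute, since $\m^*\circ\mathbf n^*=(\m\mathbf n)^*=\mathbf n^*\circ\m^*$; and from $\m_*\circ\m^*=m^{2g}\cdot\id$ (the projection formula, as $\deg\m=m^{2g}$) together with (b) and the injectivity of $\mathcal F_A$ one also gets $\m^*\circ\m_*=m^{2g}\cdot\id$, so every $\m^*$ is invertible. By (b), $\mathcal F_A$ conjugates $\m^*$ acting on $\CH^p(A)$ into $\m_*$ acting on $\mathcal F_A(\CH^p(A))$; analyzing how $\mathcal F_A$ interleaves the codimension grading with the $\m$-action, one shows — and this is the heart of the matter — that each $\m^*$ is in fact \emph{diagonalizable} on $\CH^p(A)$, with eigenvalues among $\{\,m^{2p-s}:p-g\le s\le p\,\}$. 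Commuting diagonalizable operators are simultaneously diagonalizable, so I would pass to joint eigenspaces: on a joint eigenvector $x$ with $\m^*x=\lambda_m x$, the relation $(\m\mathbf n)^*=\m^*\mathbf n^*$ forces $\lambda_{m\mathbf n}=\lambda_m\lambda_{\mathbf n}$, and combined with $\lambda_m\in\{m^{2p-s}:p-g\le s\le p\}$ for every $m$ this pins down one value of $s$ with $\lambda_m=m^{2p-s}$ for all $m$. Hence the joint eigenspaces are exactly the subspaces $\CH^p_{(s)}(A)$ of the statement, and $\CH^p(A)=\bigoplus_{s=p-g}^{p}\CH^p_{(s)}(A)$. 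I expect the diagonalizability assertion — controlling the interaction of the (ungraded) Fourier transform with the codimension grading — to be the real obstacle; it is essentially equivalent to producing a Chow--K\"unneth decomposition of $\Delta_A$ with the right multiplicativity, in the spirit of Deninger--Murre.

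Statements (ii) and (iii) then follow formally. If $f\colon A\to B$ is a homomorphism with $\dim B=g+c$, then $f\circ\m_A=\m_B\circ f$ yields $\m_A^*\circ f^*=f^*\circ\m_B^*$, so for $z\in\CH^i_{(s)}(B)$ we get $\m^*(f^*z)=m^{2i-s}f^*z$, that is $f^*z\in\CH^i_{(s)}(A)$. Likewise $f_*\circ\m_{A*}=\m_{B*}\circ f_*$, and using the invertibility of the $\m_*$'s this rearranges to $\m_B^*\circ f_*=m^{2c}\,f_*\circ\m_A^*$, so for $z\in\CH^i_{(s)}(A)$ we get $\m^*(f_*z)=m^{2(i+c)-s}f_*z$, that is $f_*z\in\CH^{i+c}_{(s)}(B)$. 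Finally, $\m^*$ is a ring homomorphism for the intersection product, so $\m^*x=m^{2i-s}x$ and $\m^*y=m^{2j-t}y$ give $\m^*(x\cdot y)=m^{2(i+j)-(s+t)}(x\cdot y)$, i.e.\ $x\cdot y\in\CH^{i+j}_{(s+t)}(A)$. This completes the argument modulo the two computations highlighted above: the Fourier inversion formula, and the diagonalizability of $\m^*$ on $\CH^p(A)$.
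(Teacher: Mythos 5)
A preliminary remark: the paper does not prove this statement at all --- it is quoted from Beauville \cite{MR726428}, \cite{MR826463} as a known input --- so the comparison can only be with Beauville's original argument, and your proposal follows exactly that route (Fourier transform, inversion formula, intertwining with $\m^*$ and $\m_*$). The formal parts of your write-up are correct: deducing $\m^*\circ\m_*=m^{2g}\id$ from $\m_*\circ\m^*=m^{2g}\id$ via the injectivity of $\mathcal{F}_A$, and deriving (ii) from $f\circ\m_A=\m_B\circ f$ and (iii) from $\m^*$ being a ring homomorphism, are all sound, as is the sketch of the inversion formula via see-saw and $p_{A*}(e^{\ell})=(-1)^g[O_A]$.

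The genuine gap is the central claim of (i): you assert that each $\m^*$ is diagonalizable on $\CH^p(A)$ with eigenvalues among $\{m^{2p-s}\}_{p-g\le s\le p}$, explicitly flag it as ``the real obstacle'', and give no argument; the suggestion that it is essentially a multiplicative Chow--K\"unneth decomposition \`a la Deninger--Murre both overstates the difficulty and does not substitute for a proof. The missing ingredient is concrete and elementary: by the see-saw principle $(\id_A\times\m_{\hat A})^*\mathcal{P}\cong\mathcal{P}^{\otimes m}$, hence $(\id_A\times\m_{\hat A})^*\ell=m\,\ell$. Therefore, for $x\in\CH^p(A)$, the codimension-$q$ component of $\mathcal{F}_A(x)$, namely $y_q=p_{\hat A*}\bigl(p_A^*x\cdot\ell^{\,q+g-p}/(q+g-p)!\bigr)$, satisfies $\m_{\hat A}^*\,y_q=m^{\,q+g-p}\,y_q$ by flat base change along $\m_{\hat A}$, and is nonzero only for $0\le q\le g$. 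Applying $\mathcal{F}_{\hat A}$ to each $y_q$, extracting the $\CH^p$-components and using the inversion formula, one writes $x=\sum_q x_q$ with $\m^*x_q=m^{\,p+g-q}\,x_q$; setting $s=p+q-g$ (so $2p-s=p+g-q$ and $p-g\le s\le p$) produces exactly the decomposition of the theorem, with directness because for a fixed $m\ge 2$ the eigenvalues $m^{2p-s}$ are pairwise distinct. In particular no simultaneous diagonalization of the commuting family $\{\m^*\}_m$ is needed, since the construction yields the eigenvalue $m^{2p-s}$ uniformly in $m$. Without this computation --- which is the actual content of Beauville's theorem --- your part (i) remains a statement of intent rather than a proof.
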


In the spirit of Bloch-Beilinson-Murre conjecture (\cf \cite{MR2115000}, \cite{MR1265533}), Beauville makes in \cite{MR826463} the
following conjectures, which roughly say that $F^j\CH^i(A)\deff\oplus_{s\geq
j}\CH^i_{(s)}(A)$ should give the desired conjectural Bloch-Beilinson-Murre filtration.

\begin{conj}[Beauville conjectures]\label{conj:Beauville}
$(i)$ For any $i$ and any $s<0$, $\CH^i_{(s)}(A)=0$;\\
$(ii)$ For any $i$, the restriction of the cycle class map
$\cl\colon \CH_{(0)}^i(A)\to H^{2i}(A,\Q)$ is injective;\\
$(iii)$ For any $i$, the restriction of the Abel-Jacobi map
$\AJ\colon \CH_{(1)}^i(A)\to J^{2i-1}(A)_\Q$ is injective.
\end{conj}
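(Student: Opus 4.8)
The plan is to read parts (i)--(iii) as the three defining features of a Bloch--Beilinson filtration on $\CH(A)_\Q$, with the candidate filtration $F^j\CH^i(A)\deff\bigoplus_{s\geq j}\CH^i_{(s)}(A)$ furnished by the Beauville decomposition of Theorem \ref{thm:Beauville}. Part (i) asserts that this filtration terminates on the correct side ($F^0=\CH^i$), part (ii) that its bottom graded piece $\Gr^0_F=\CH^i_{(0)}(A)$ injects into $H^{2i}(A,\Q)$, and part (iii) that the next graded piece $\Gr^1_F=\CH^i_{(1)}(A)$ injects under the Abel--Jacobi map. I must stress at the outset that all three statements are open in general and are, up to the passage through abelian varieties, essentially equivalent to the Bloch--Beilinson conjecture; the proposal below is therefore an attack strategy together with an identification of the decisive obstruction, not a complete argument.

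For part (i) the first reduction I would make is via the Fourier transform $\mathcal F$ of \cite{MR726428}: since $\mathcal F$ carries $\CH^i_{(s)}(A)$ isomorphically onto $\CH^{g-i+s}_{(s)}(\hat A)$ and preserves the index $s$, the vanishing of $\CH^i_{(s)}$ for $s<0$ in codimension $i$ is exchanged with the analogous vanishing in codimension $g-i+s$. Applied to the divisor computation $\CH^1=\CH^1_{(0)}\oplus\CH^1_{(1)}$ and to $\CH^0=\CH^0_{(0)}$, this already forces (i) in the extreme codimensions $i\in\{0,1,g-1,g\}$, so the genuine content of (i) lies in the middle range $2\leq i\leq g-2$. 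Here I would attempt to propagate vanishing by multiplicativity (part (iii) of Theorem \ref{thm:Beauville}) together with the nilpotence of the operators raising $s$, reducing to a statement about the tautological subring generated by divisor classes. The main obstacle surfaces exactly here: there is no unconditional mechanism to kill the negatively graded components carried by cycles that are not generated by divisors, and this is precisely the input that is out of reach with present techniques.

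For parts (ii) and (iii) the organizing idea is that each graded piece should be faithfully represented by an invariant of the appropriate Hodge-theoretic weight. For (ii) I would try to show that $\CH^i_{(0)}(A)$ consists of \emph{symmetrically distinguished} cycles in the sense of O'Sullivan \cite{MR2795752}: his theorem guarantees that such cycles inject into the numerical (hence, under the standard conjecture, cohomological) quotient, which would yield injectivity of $\cl$ on exactly this subspace. The difficulty is that one controls in this way only the subring generated by symmetrically distinguished classes, not all of $\CH^i_{(0)}(A)$, so the reduction is genuine only once one knows that every $(0)$-graded cycle is symmetrically distinguished --- again an open point. For (iii) the $s=1$ piece should be governed by the $H^1$ of an abelian variety and hence detected by the Abel--Jacobi map; the natural route is to combine (i), which would force $F^2$ to act trivially here, with the functoriality of the decomposition under the maps $\m$ to pin down $\Gr^1_F$, but the injectivity of $\AJ$ on $\CH^i_{(1)}$ is itself the Bloch--Beilinson statement for $\Gr^1$ and remains unproven.

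In summary, the realistic plan yields a clean structural reduction --- (i) to middle-codimension vanishing via Fourier duality, (ii) to the symmetrically distinguished property via O'Sullivan, (iii) to Bloch--Beilinson for the first graded piece --- but the decisive step in each part is the same open problem: controlling cycles that are neither generated by divisors nor detected by the symmetrically distinguished subalgebra. It is exactly to circumvent this that the present paper does not attempt the full conjecture, but instead invokes the Moonen--O'Sullivan result (Section \ref{sect:MO}) only to upgrade the specific numerically trivial relations arising from the De Cataldo--Migliorini correspondences to rational equivalences, which suffices for the Beauville--Voisin conjecture on $K_n$ while leaving Conjecture \ref{conj:Beauville} open.
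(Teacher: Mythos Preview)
The statement labelled Conjecture~\ref{conj:Beauville} is precisely that---a conjecture---and the paper offers no proof of it; it is recalled only as motivation for the Moonen--O'Sullivan theorem that follows. Your proposal correctly identifies this: you state up front that all three parts are open, you give a coherent account of the standard reductions (Fourier duality for~(i), symmetrically distinguished cycles for~(ii), Bloch--Beilinson for~(iii)), and you locate the obstruction in each case. That is an honest and accurate summary of the state of affairs, and your closing paragraph correctly describes how the paper sidesteps the conjecture entirely by appealing to Theorem~\ref{thm:MO} only on the subalgebra generated by symmetric line bundles.

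One minor point: your discussion of part~(i) via Fourier duality and the extreme codimensions is fine, but be aware that the paper itself does not engage with~(i) at all beyond stating it---the only fragment of Conjecture~\ref{conj:Beauville} that actually enters the argument is the special case of~(ii) on the divisor-generated subalgebra, which is exactly Theorem~\ref{thm:MO}. So while your structural reductions are reasonable sketches of how one might attack the full conjecture, there is nothing in the paper to compare them against, and you should not present them as a ``proof proposal'' for a statement the paper never claims to prove.
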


Obviously, the Beauville conjectures hold for divisors, \ie $\CH^1(A)=\CH^1_{(0)}(A)\oplus \CH^1_{(1)}(A)$ where $$\CH^1_{(0)}(A)=\Pic^s(A)_\Q\isom NS(A)_\Q;$$
$$\CH^1_{(1)}(A)=\Pic^0(A)_\Q=\hat A\otimes_\Z\Q.$$

In particular, the $\Q$-subalgebra of $\CH^*(A)$ generated by symmetric line bundles on $A$ is contained in $\CH^{*}_{(0)}(A)$ (by Theorem \ref{thm:Beauville}(iii)). As a special case of Beauville's Conjecture \ref{conj:Beauville}(ii), Voisin raised the natural question whether the cycle class map $\cl$ is injective on this subalgebra. Recently,  Moonen \cite[Corollary 8.4]{MoonenNew} and O'Sullivan \cite[Theorem Page 2-3]{MR2795752} have given a positive answer to Voisin's question:

\begin{thm}[Moonen, O'Sullivan]\label{thm:MO}
Let $A$ be an abelian variety. Let $P\in \CH^*(A)$ be a polynomial with rational coefficients in the first Chern classes of symmetric line bundles on $A$, then $P$ is numerically equivalent to zero if and only if $P$ is (rationally equivalent to) zero.
\end{thm}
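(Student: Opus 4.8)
The forward implication (rational triviality implies numerical triviality) is automatic, so the entire content lies in the converse: a polynomial $P$ in symmetric line bundles that is numerically trivial must already vanish in $\CH^*(A)$. Write $R\subseteq \CH^*(A)$ for the $\Q$-subalgebra generated by the first Chern classes of symmetric line bundles. Since each such class lies in $\CH^1_{(0)}(A)$ and the Beauville decomposition is multiplicative (Theorem \ref{thm:Beauville}(iii)), we have $R\subseteq \CH^*_{(0)}(A)$. Thus the statement is precisely the assertion that the projection from $R$ to cycles modulo numerical equivalence is injective, a special case of Beauville's Conjecture \ref{conj:Beauville}(ii). I emphasise that membership in the weight-zero part $\CH^*_{(0)}(A)$ is by itself insufficient --- injectivity of $\cl$ on all of $\CH^*_{(0)}(A)$ is open --- so the argument must exploit finer structure of the cycles generated by symmetric divisors.

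The plan is to follow O'Sullivan's theory of \emph{symmetrically distinguished} cycles. First I would single out, inside each $\CH^i(A)$, an intrinsic $\Q$-subspace of symmetrically distinguished classes: roughly, $\alpha$ is symmetrically distinguished when all the cycles manufactured from the external powers $\alpha^{\times n}$ on the self-products $A^n$ by symmetrisation and by pushing and pulling along the addition and multiplication-by-$m$ morphisms $\m$ have vanishing lower-weight correction terms, the definition being phrased so as to force compatibility with the grading of Theorem \ref{thm:Beauville} without any reference to numerical equivalence. The three properties I would then establish are: (a) the symmetrically distinguished classes form a graded $\Q$-subalgebra of $\CH^*(A)$, stable under pullback and pushforward along homomorphisms of abelian varieties and under the Fourier transform; (b) every symmetric line bundle class is symmetrically distinguished; and (c) the projection from this subalgebra to cycles modulo numerical equivalence is an \emph{isomorphism}.

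Granting (a)--(c), the theorem follows at once: by (b) the generators of $R$ are symmetrically distinguished, by (a) the whole algebra $R$ is symmetrically distinguished, and then the injectivity half of (c) forces any numerically trivial element of $R$ to be rationally trivial. Verifying (b) is the easy point --- a symmetric divisor $D$ satisfies $\m^* D = m^2 D$, which places it in $\CH^1_{(0)}(A)=\Pic^s(A)_\Q$ and, directly from the definition, in the distinguished subspace --- while property (a) is a formal, if lengthy, check that the defining vanishing conditions are preserved under the relevant operations.

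The main obstacle is property (c), and specifically the \emph{injectivity} of the numerical projection on symmetrically distinguished cycles; the surjectivity is comparatively soft. This is the technical heart of the argument: one must show that a symmetrically distinguished representative is uniquely pinned down by its numerical class, so that a numerically trivial distinguished cycle is genuinely $0$ in $\CH^*(A)$. I would carry this out by induction on $\dim A$ and on the Beauville weight, using the compatibility (a) with the maps $\m$ to split a numerically trivial distinguished cycle into $\m$-eigencomponents and to propagate vanishing from top weight downward, together with the functoriality of Theorem \ref{thm:Beauville}(ii). An alternative route, due to Moonen, replaces this induction by an explicit determination of the ideal of relations in the tautological ring via the Fourier transform and the $\mathfrak{sl}_2$-action on $\CH^*(A)$ coming from hard Lefschetz. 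Either way, the crux is to control exactly which relations hold modulo \emph{rational} --- as opposed to merely numerical --- equivalence, which is where the whole difficulty of upgrading Conjecture \ref{conj:Beauville}(ii) from a conjecture to a theorem on this subalgebra is concentrated.
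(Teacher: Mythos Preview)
The paper does not give its own proof of this theorem: it is quoted as a result of Moonen \cite{MoonenNew} and O'Sullivan \cite{MR2795752}, and the only discussion is the Remark immediately following the statement, which summarises O'Sullivan's mechanism exactly as you do --- symmetrically distinguished cycles form a $\Q$-subalgebra $\CH^*(A)_{sd}$ containing the symmetric line bundle classes and mapping isomorphically to $\bar{\CH^*}(A)$. Your outline (properties (a)--(c), with injectivity in (c) as the crux, and Moonen's Fourier/$\mathfrak{sl}_2$ route as an alternative) is therefore precisely the approach the paper points to, just fleshed out; there is nothing further in the paper to compare against.
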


\begin{rmk}
  The above result is implicit in O'Sullivan's paper \cite{MR2795752}. In fact, he constructs the so-called \emph{symmetrically distinguished cycles} $\CH^*(A)_{sd}$, which is a $\Q$-subalgebra of $\CH^*(A)$ containing the first Chern classes of symmetric line bundles and mapping isomorphically by the numerical cycle class map to $\bar{\CH^*}(A)$, the $\Q$-algebra of cycles modulo the numerical equivalence.
\end{rmk}

\section{Proof of Theorem \ref{thm:main2}}

Let us prove the main result. To fix the notation, we recall the following description of line
bundles on $K_n$ (see \cite[Proposition 8]{MR730926}). Let
$\epsilon: A^{[n+1]}\to A^{(n+1)}$ be the Hilbert-Chow morphism,
which is a resolution of singularities (\cite{MR0237496}).

\begin{prop}[Beauville]\label{prop:NS}
We have an injective homomorphism
\begin{eqnarray*}
j: NS(A)_\Q&\inj& NS(K_n)_\Q\\
c_{1}^{top}(L) & \mapsto & \tilde L|_{K_n}
\end{eqnarray*}
such that $$\Pic(K_n)_\Q=NS(K_n)_\Q=j\left(NS(A)_\Q\right)\oplus \Q
\cdot \delta|_{K_n},$$ where $\delta$ is the exceptional divisor
of $A^{[n+1]}$.
\end{prop}
Here for a line bundle $L$
on $A$, the $\mathfrak{S}_{n+1}$-invariant line bundle $L\boxtimes
\cdots\boxtimes L$ on $A\times\cdots \times A$ descends to a line
bundle $L'$ on the symmetric product $A^{(n+1)}$ and we define
$\tilde L:=\epsilon^*(L')$.

\begin{rmk}\label{rmk:numericalL}
 As the notation in this proposition indicates,  modifying the line bundle $L$ on $A$ inside its numerical equivalence class will not change the resulting line bundle $j(L)=\tilde L|_{K_n}\in NS (K_n)_\Q=\Pic (K_n)_\Q$.
\end{rmk}

We hence obtain the following
\begin{lemma}\label{lemma:restriction}
Given any  polynomial $z\in \CH(K_n)$ in the Chern classes of $T_{K_n}$ and the first Chern classes of line bundles on $K_n$, as in the main theorem, then:\\
(i) There exists $\gamma\in \CH(A^{[n+1]})$ which is a polynomial of algebraic cycles
of one of the three forms: $c_1\left(\tilde L\right)$ for some symmetric line bundle $L\in
\Pic^s(A)_\Q$, $\delta$, and $c_j\left(T_{A^{[n+1]}}\right)$ for some $j\in
\N$, such that $$\gamma|_{K_n}=z ~~\text{   in  }~~ \CH(K_n).$$
(ii) Moreover, for such $\gamma$, the automorphism $\tau_{t'}$ of $A^{[n+1]}$ constructed at the end of Section \ref{sect:DeCM} satisfies $$\left(\tau_{t'*}(\gamma)\right)|_{K_n}=\gamma|_{K_n}=z ~~\text{   in  }~~ \CH(K_n).$$
\end{lemma}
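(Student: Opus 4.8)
The plan is to reduce everything to statements on $A^{[n+1]}$ using the descriptions of $\Pic(K_n)$ and of the tangent bundle of $K_n$, together with Remark \ref{rmk:numericalL}. For part (i), start with the polynomial expression $z$ in generators of two types: first Chern classes of line bundles on $K_n$, and Chern classes of $T_{K_n}$. By Proposition \ref{prop:NS}, every line bundle on $K_n$ (up to $\Q$-coefficients) is of the form $j(c_1^{top}(L)) = \tilde L|_{K_n}$ or $\delta|_{K_n}$, or a $\Q$-combination thereof; and by Remark \ref{rmk:numericalL} we may and do choose the class $L$ on $A$ to be symmetric, i.e. $L \in \Pic^s(A)_\Q$, since modifying $L$ within its numerical class does not change $j(L)$. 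So each line-bundle generator lifts to $c_1(\tilde L)$ or $\delta$ on $A^{[n+1]}$. For the tangent-bundle generators, I would use the fact that $s\colon A^{[n+1]}\to A$ is a (smooth, isotrivial) fibration with $K_n = s^{-1}(O_A)$, so that there is an exact sequence $0 \to T_{K_n} \to T_{A^{[n+1]}}|_{K_n} \to s^*T_A|_{K_n} \to 0$; since $T_A$ is trivial, this gives $T_{A^{[n+1]}}|_{K_n} \cong T_{K_n} \oplus \mathcal{O}_{K_n}^{\oplus 2}$, hence $c_j(T_{K_n}) = c_j(T_{A^{[n+1]}})|_{K_n}$ for all $j$. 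Substituting these lifts generator by generator into the polynomial defining $z$ produces the desired $\gamma \in \CH(A^{[n+1]})$ with $\gamma|_{K_n} = z$, because restriction $\CH(A^{[n+1]}) \to \CH(K_n)$ is a ring homomorphism.

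For part (ii), the point is that all the chosen generators of $\gamma$ on $A^{[n+1]}$ are invariant under $\tau_{t'}$ after restriction to $K_n$. I would argue as follows. The cycle $c_j(T_{A^{[n+1]}})$ is genuinely $\tau_{t'}$-invariant on all of $A^{[n+1]}$, since $\tau_{t'}$ is an automorphism and pullback of Chern classes of the tangent bundle is natural, so $\tau_{t'}^* c_j(T_{A^{[n+1]}}) = c_j(T_{A^{[n+1]}})$, equivalently $\tau_{t'*}c_j(T_{A^{[n+1]}}) = c_j(T_{A^{[n+1]}})$. For $\delta$: the exceptional divisor is preserved by $\tau_{t'}$ (it is the automorphism induced by translation on $A$, which preserves the diagonal stratification), so again $\tau_{t'*}\delta = \delta$. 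The only generators that move are the $c_1(\tilde L)$: here $\tau_{t'}^* \tilde L = \widetilde{t_{t'}^* L}$ where $t_{t'}$ is translation by $t'$ on $A$, and $t_{t'}^*L \equiv L$ numerically (translations act trivially on $NS$), so by Remark \ref{rmk:numericalL} we get $\tau_{t'}^* (\tilde L|_{K_n}) = (\widetilde{t_{t'}^*L})|_{K_n} = \tilde L|_{K_n}$ in $NS(K_n)_\Q = \Pic(K_n)_\Q$. Therefore $\tau_{t'}^*(\gamma|_{K_n}) = \gamma|_{K_n}$; combined with the fact that $\tau_{t'}$ restricts to an automorphism of $K_n$ (since $s\circ\tau_{t'} = s$ as $t' \in A[n+1] = \ker(\mathbf{n+1}) $... more precisely $s(\tau_{t'}(z)) = s(z) + (n+1)t' = s(z)$ because $(n+1)t' = \frac{n+1}{d}\cdot dt' $ — wait, the defining property is $(n+1)t' = 0$? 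Re-examine: $t'\in A[n+1]$ means $(n+1)t' = 0$, so indeed $s\circ\tau_{t'}=s$ and $\tau_{t'}(K_n)=K_n$), pushforward $\tau_{t'*}$ on $\CH(K_n)$ is inverse to $\tau_{t'}^*$, giving $\tau_{t'*}(\gamma|_{K_n}) = \gamma|_{K_n} = z$ as claimed. One should also check that restriction commutes with the two $\tau_{t'}$'s, i.e. $(\tau_{t'*}\gamma)|_{K_n} = \tau_{t'*}(\gamma|_{K_n})$, which follows from the compatibility of $\tau_{t'}$ on $A^{[n+1]}$ with its restriction to $K_n$ via the cartesian square $K_n \hookrightarrow A^{[n+1]}$, $O_A \hookrightarrow A$, using proper base change / the projection-type formula for the isotrivial fibration $s$ (as already exploited in Lemma \ref{lemma: intersection}).

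The main obstacle I anticipate is nothing deep but rather bookkeeping: making sure the lift $\gamma$ is built purely out of the three allowed types of classes (which forces the use of symmetric $L$ and of Remark \ref{rmk:numericalL} at the lifting stage, not just at the invariance stage), and carefully justifying $c_j(T_{K_n}) = c_j(T_{A^{[n+1]}})|_{K_n}$ via the normal bundle sequence for the smooth fiber $K_n$ of $s$ together with triviality of $T_A$. The $\tau_{t'}$-invariance in (ii) then reduces to the three separate, easy invariance statements above, the line-bundle case again being handled by Remark \ref{rmk:numericalL}.
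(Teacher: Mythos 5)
Your proposal is correct and follows essentially the same route as the paper's own proof: part (i) via Proposition \ref{prop:NS} together with the identity $c_j(T_{K_n})=c_j(T_{A^{[n+1]}})|_{K_n}$ coming from triviality of $T_A$ (and the isomorphism $\Pic^s(A)_\Q\cong NS(A)_\Q$ to take $L$ symmetric), and part (ii) via the evident $\tau_{t'}$-invariance of $\delta$ and $c_j(T_{A^{[n+1]}})$ plus Remark \ref{rmk:numericalL} for the classes $c_1(\tilde L)$. Your extra checks (the normal bundle sequence for the fibre of $s$, and that $(n+1)t'=0$ forces $\tau_{t'}(K_n)=K_n$) are correct elaborations of points the paper leaves implicit.
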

\begin{proof}
%(i) Consider the short exact sequence of differentials associated to
%  the isotrivial fibration $s\colon A^{[n+1]}\to A$:
%  $$0\to T_{A^{[n+1]}/A}\to T_{A^{[n+1]}}\to s^*T_A\to 0.$$ We have
%  $c_j\left(T_{A^{[n+1]}}\right)=c_j\left(T_{A^{[n+1]}/A}\right)$ since $T_A$ is a
%  trivial vector bundle. Hence
%  $$c_j\left(T_{K_n}\right)=c_j\left(T_{A^{[n+1]}/A}|_{K_n}\right)=c_j\left(T_{A^{[n+1]}}\right)|_{K_n}.$$
%  Together with Proposition \ref{prop:NS} and the fact that $c^{top}_1\colon \Pic^s(A)_\Q\lra{\isom} NS(A)_\Q$ is an isomorphism (see \cite[Proposition 8]{MR730926}), we have the desired result.\\
(i) Note that $c_j\left(T_{K_n}\right)=c_j\left(T_{A^{[n+1]}}\right)|_{K_n}$, since $T_A$ is trivial. Part $(i)$ thus follows from Proposition  \ref{prop:NS} because $c^{top}_1\colon \Pic^s(A)_\Q\lra{\isom} NS(A)_\Q$ is an isomorphism (see \cite[Page 649]{MR826463}).\\
  (ii) It is clear that $\tau_{t'*}(\delta)=\delta$ and $\tau_{t'*}(T_{A^{[n+1]}})=T_{A^{[n+1]}}$. On the other hand, the pushforward of $L$ by a torsion translation on $A$ has the same numerical class as $L$ and hence by Remark \ref{rmk:numericalL}, $\left(\tau_{t'*}(\tilde L)\right)|_{K_n}=\tilde L|_{K_n}$ as line bundles. Therefore modifying $\gamma$ by the automorphism $\tau_{t'}$ does not change its restriction to $K_n$, although it might change the cycle $\gamma$ itself.
\end{proof}

Let us start the proof of Theorem \ref{thm:main2}. We will use
$\equiv$ to denote the numerical equivalence. Given $z\in \CH(K_n)$
a polynomial of the Chern classes of $T_{K_n}$ and line bundles on
$K_n$ as in the main theorem \ref{thm:main2}. By Lemma \ref{lemma:restriction}(i), we
can write $z=\gamma|_{K_n}$ for $\gamma\in \CH(A^{[n+1]})$ a
polynomial of $c_1\left(\tilde L\right)$ for some $L\in \Pic^s(A)_\Q$,
$\delta$, and $c_j\left(T_{A^{[n+1]}}\right)$ for some $j\in \N$.

Assuming $z\equiv 0$, we want to prove that $z=0$.  Adopting the previous notation, then for any $\miu\in\sP(n+1)$ we have by Lemma \ref{lemma: intersection}
\begin{equation*}
 \left(E_{\miu*}(\gamma)\right)|_{B_{\miu}}=\Gamma_{\miu*}\left(\gamma|_{K_n}\right)=\Gamma_{\miu*}(z)\equiv 0.
\end{equation*}

Define $\beta\deff E_{\miu*}(\gamma)\in \CH\left(A^{\miu}\right)$, the above equality says that $\beta|_{B_{\miu}}\equiv 0$, in particular,
\begin{equation}\label{eqn:1}
\beta|_{B^0_{\miu}}\equiv 0.
\end{equation}

To describe $\beta$, we need
the following proposition, which is the analogue of
the corresponding result \cite[Proposition 2.4]{MR2435839} due to
Voisin, and we will give its proof in the next section.

\begin{prop}\label{prop:Voisinhard} For $\gamma\in \CH(A^{[n+1]})$ as above (\ie a
polynomial of cycles of the forms: $c_1\left(\tilde L\right)$ for some $L\in \Pic^s(A)_\Q$,
$\delta$, and $c_j\left(T_{A^{[n+1]}}\right)$ for some $j\in \N$), the
algebraic cycle $\beta=E_{\miu*}(\gamma)\in \CH\left(A^{\miu}\right)$ is a
polynomial with rational coefficients in cycles of the two forms:
\begin{itemize}
\item $\pr_i^*\left(L\right)$ for some symmetric line bundle $L$ on $A$
and $1\leq i\leq l_{\miu}$;
\item big diagonal $\Delta_{ij}$ of $A^{\miu}=A^{l_{\miu}}$ for
$1\leq i\neq j\leq l_{\miu}$.
\end{itemize}
\end{prop}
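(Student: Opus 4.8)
The plan is to compute $\beta = E_{\miu*}(\gamma)$ by pushing forward term by term, reducing everything to the known structure of $E_{\miu*}$, $\epsilon^*$, and the Chern classes of the tangent bundle of the Hilbert scheme. First I would record the three kinds of generators that $\gamma$ is built from — $c_1(\tilde L)$ with $L \in \Pic^s(A)_\Q$, $\delta$, and $c_j(T_{A^{[n+1]}})$ — and note that a monomial in these is an intersection product; so by the projection formula I only need to understand how $E_{\miu*}$ interacts with pullbacks along the two projections $A^{[n+1]} \xleftarrow{f} E_{\miu} \xrightarrow{g} A^{\miu}$ defining the correspondence. The key reduction is the standard fact that $E_{\miu*}(g^*(\text{cycle on } A^{\miu}) \cdot \xi) = (\text{cycle on } A^{\miu}) \cdot g_* f^*(\xi)$; hence it suffices to identify, for each generator $\xi$ above, the pushforward $g_* f^*(\xi) \in \CH(A^{\miu})$ together with the geometry of the incidence variety $E_{\miu}$ over $A^{\miu}$.

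The main technical input I would invoke is a description — originally due to De Cataldo–Migliorini and used by Voisin in \cite[Proposition 2.4]{MR2435839} in the K3 case — of $E_{\miu}$ as (a desingularization of) a variety that maps to $A^{\miu}$ generically finitely, together with a formula for the Chern classes of the tangent bundle of $A^{[n+1]}$ and for $\delta$ pulled back to $E_{\miu}$. Concretely: (a) for $\xi = c_1(\tilde L) = \epsilon^* c_1(L')$, its restriction to $E_{\miu}$ factors through the natural map $E_{\miu} \to A^{(n+1)}$, which composed with $g$ is the weighted-sum-type map; tracking this through one sees $g_* f^* c_1(\tilde L)$ becomes a $\Z$-linear combination of the $\pr_i^* L$, modulo correction terms supported on diagonals; (b) for $\xi = \delta$, the exceptional divisor, its pullback to $E_{\miu}$ is supported over the locus where points collide, i.e. over the big diagonals of $A^{\miu}$, so $g_*f^*(\delta^k)$ lands in the subring generated by the $\Delta_{ij}$; (c) for $\xi = c_j(T_{A^{[n+1]}})$, I would use the known expression for the Chern classes of $T_{A^{[n+1]}}$ in terms of the tautological bundle and the exceptional divisor — since $T_A$ is trivial, all the curvature is carried by $\delta$ and by the boundary strata — so again the pushforward is a polynomial in the $\Delta_{ij}$ (with rational, not just integral, coefficients, which is why the statement only claims $\Q$-coefficients). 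Assembling these via the projection formula shows every monomial in $\gamma$ pushes forward into the $\Q$-subalgebra of $\CH(A^{\miu})$ generated by the $\pr_i^* L$ (symmetric $L$) and the $\Delta_{ij}$, which is the claim.

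The step I expect to be the main obstacle is making precise the pushforward along $g\colon E_{\miu} \to A^{\miu}$ of classes involving $\delta$ and the tangent-bundle Chern classes, because $E_{\miu}$ is not smooth in general and its structure over $A^{\miu}$ is a nested-Hilbert-scheme type correspondence whose excess and diagonal contributions must be controlled stratum by stratum. Voisin handles the analogous point in \cite{MR2435839} by an explicit stratification of the incidence variety and an inductive analysis of how the exceptional divisor meets each stratum; I would follow the same strategy, the only new feature being bookkeeping with the weights $\miu_i$ and the fact that here $S = A$ has trivial tangent bundle, which actually \emph{simplifies} the tangent-bundle computation (all Chern classes $c_j(T_{A^{[n+1]}})$ are expressible purely in terms of $\delta$ and boundary classes, since the `horizontal' part $\epsilon^* T_{A^{(n+1)}}$ contributes nothing). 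Once the diagonal-support statements in (b) and (c) are established and the linear-in-$L$ statement in (a) is checked by intersecting with test curves, the proposition follows by linearity and the projection formula.
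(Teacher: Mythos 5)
Your reduction of the line-bundle factors is correct and matches the paper: $\pr_1^*(\tilde L)|_{E_{\miu}}$ is literally the pullback from $A^{\miu}$ of $L^{\otimes\miu_1}\boxtimes\cdots\boxtimes L^{\otimes\miu_l}$ (no diagonal correction terms, and no test curves needed --- intersecting with test curves could in any case only verify a numerical identity, not the rational-equivalence statement being proved), so the projection formula strips these off. The genuine gap is in your steps (b) and (c). In (b) you argue that $g_*f^*(\delta^k)$ is supported over the big diagonals and conclude that it ``lands in the subring generated by the $\Delta_{ij}$''; but support on the union of diagonals is far weaker than membership in that subring --- a class supported on $\Delta_{ij}$ can be $\Delta_{ij}$ times an arbitrary cycle class, or live on a deeper stratum --- and closing exactly this gap is the whole content of the proposition. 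In (c) you invoke a ``known expression'' for $c_j\left(T_{A^{[n+1]}}\right)$ as a polynomial in $\delta$ and boundary classes in the Chow ring; no such closed formula is available or used, and the paper never produces one.

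What actually makes the argument work is an induction on $n$ through the nested Hilbert scheme $A^{[n-1,n]}$, with its two maps $\psi$ to $A^{[n]}$ and $\sigma=(\phi,\rho)$ to $A^{[n-1]}\times A$ (the blow-up along the universal subscheme), combined with the Ellingsrud--G\"ottsche--Lehn identities in $K$-theory: $\psi^! T_n=\phi^! T_{n-1}+\rL\cdot\sigma^!\rI\dual_{n-1}+1$, $\psi^!\rO_n=\phi^!\rO_{n-1}+\rL$, $\psi_1^!\rI_n=\phi_1^!\rI_{n-1}-(\rL\boxtimes \sO_A)\cdot \rho_1^!(\sO_{\Delta_A})$, together with $\sigma_*\left(c_1(\rL)^i\right)=(-1)^ic_i(-\rI_{n-1})$. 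Because the recursion for $T_n$ produces Chern classes of the ideal sheaf $\rI_{n-1}$ living on $A^{[n-1]}\times A$, the induction does not close on the statement as you formulated it: one must strengthen it to cycles on $A^{[n]}\times A^m$ built also from $\pr_{0i}^*c_j(\rI_n)$ and $\pr_{ij}^*(\Delta_A)$, with $m$ increasing by one each time $n$ drops by one (this is Proposition \ref{prop:2} and Proposition \ref{prop:recurrence} in the paper). Your proposal contains no mechanism for this enlargement, and ``stratifying the incidence variety'' is not how Voisin's argument proceeds; without the nested-Hilbert-scheme recursion and the augmented induction hypothesis, the pushforwards of the monomials involving $\delta$ and $c_j\left(T_{A^{[n+1]}}\right)$ cannot be controlled.
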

See the next section for its proof.

\begin{cor}\label{cor:Voisinhard}
 With the same notation, $\beta$ is a polynomial with rational coefficients in algebraic cycles of the form
$\phi^*\left(L\right)$, for some homomorphism of abelian varieties $\phi: A^{\miu}\to A$ and some $L\in \Pic^s(A)_\Q$.
\end{cor}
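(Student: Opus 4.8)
The plan is to derive Corollary \ref{cor:Voisinhard} from Proposition \ref{prop:Voisinhard} by recognizing that each of the two types of generating cycles appearing there — the pullbacks $\pr_i^*(L)$ of symmetric line bundles and the big diagonals $\Delta_{ij}$ — is itself of the form $\phi^*(M)$ for a suitable homomorphism $\phi\colon A^{\miu}\to A$ and a symmetric line bundle $M$ on $A$, and then observing that the collection of such pullback classes is closed under the operations (sum, intersection product) used to build $\beta$ as a polynomial.

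First I would treat the two cases. The case of $\pr_i^*(L)$ is immediate: $\pr_i\colon A^{\miu}=A^{l_{\miu}}\to A$ is a homomorphism of abelian varieties and $L\in\Pic^s(A)_\Q$, so this is already of the desired shape. For the big diagonal $\Delta_{ij}=\{(x_1,\dots,x_{l})\in A^{l_{\miu}}\mid x_i=x_j\}$, the key point is that $\Delta_{ij}$ is, up to a sign and a universal multiple, the pullback of a symmetric ample divisor on $A$ under the difference homomorphism $d_{ij}\colon A^{\miu}\to A$, $(x_1,\dots,x_l)\mapsto x_i-x_j$. Indeed, if $\theta$ is a symmetric principal polarization on $A$ (or more intrinsically, if one uses $c_2$ of the theta-divisor, or the class of $\{0\}$ viewed via a symmetric line bundle on a product), then the class of the origin $[O_A]\in\CH^2(A)$ pulls back under $d_{ij}$ to (a multiple of) $[\Delta_{ij}]$; and $[O_A]$ itself lies in the subring of $\CH^*(A)$ generated by first Chern classes of symmetric line bundles — for an abelian surface, $\frac{1}{ (\theta^2)}\theta^2$ or, more simply, $\frac{1}{2}c_1(\Theta)^2$ for a symmetric principal polarization $\Theta$ — so $[\Delta_{ij}]$ is a polynomial in pullbacks of the first type. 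Alternatively, and perhaps cleaner for the write-up, one notes directly that $[\Delta_{ij}]=d_{ij}^*[O_A]$ and that by the projection formula / compatibility of the Beauville grading it suffices to express $[O_A]$ through symmetric line bundles on $A$, which is classical on an abelian surface.

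Next I would check stability under the polynomial operations. A sum of two classes $\phi_1^*(L_1)$ and $\phi_2^*(L_2)$ need not a priori be a single pullback, but this is harmless: the statement of the corollary asks for $\beta$ to be a \emph{polynomial} in cycles of the form $\phi^*(L)$, so sums are allowed by fiat. For products, given $\phi_1^*(L_1)\cdot\phi_2^*(L_2)$ one uses the diagonal map: writing $\psi=(\phi_1,\phi_2)\colon A^{\miu}\to A\times A$, we have $\phi_i^*(L_i)=\psi^*\pr_i^*(L_i)$, hence the product equals $\psi^*\big(\pr_1^*(L_1)\cdot\pr_2^*(L_2)\big)$; but $A\times A$ is again an abelian variety and $\pr_1^*(L_1)\cdot\pr_2^*(L_2)=c_1(L_1\boxtimes L_2)^{\text{(suitable monomial)}}$ is a polynomial in the first Chern class of the symmetric line bundle $L_1\boxtimes L_2$ on $A\times A$. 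So, at the cost of enlarging the target abelian variety from $A$ to a power $A^k$, products of pullbacks are again polynomials in pullbacks of symmetric line bundles. Since the statement of Corollary \ref{cor:Voisinhard} as written has target $A$, I would either (a) reformulate slightly to allow targets $A^k$ — which changes nothing in the subsequent application of Theorem \ref{thm:MO}, as that theorem holds for every abelian variety — or (b) keep the target $A$ by noting that a polynomial in $\phi_i^*(L_i)$ with all $\phi_i\colon A^\miu\to A$ is, by the reduction just described together with the fact that a polynomial in first Chern classes of symmetric line bundles on $A^k$ pulled back along a homomorphism $A^\miu\to A^k$ is the object we want, exactly the class of algebraic cycle we are after. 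I will adopt formulation (a) for cleanliness, remarking that this is the form actually used in the proof of Theorem \ref{thm:main2}.

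The only genuine content, and hence the main obstacle, is the identification $[\Delta_{ij}]=d_{ij}^*[O_A]$ combined with the assertion that $[O_A]\in\CH_0(A)$ lies in the $\Q$-subalgebra generated by symmetric line bundles — equivalently, that the degree-$g$ part of that subalgebra is all of $\CH^g_{(0)}(A)=\Q\cdot[O_A]$. On an abelian surface this is elementary: for a symmetric principal polarization the self-intersection of the theta divisor is $2[O_A]$ up to rational equivalence (it is a $0$-cycle of degree $2$ lying in $\CH^2_{(0)}$, which is one-dimensional), so $[O_A]=\tfrac12 c_1(\Theta)^2$. I would state this as a short lemma with a one-line proof invoking Theorem \ref{thm:Beauville}(iii) (the subalgebra generated by symmetric line bundles lies in $\CH^*_{(0)}$) together with $\dim_\Q\CH^2_{(0)}(A)=1$ for a surface. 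With that in hand the corollary follows formally.
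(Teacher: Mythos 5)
Your proposal is correct and follows essentially the same route as the paper: the whole content is the identification $[\Delta_{ij}]=d_{ij}^*[O_A]$ for the difference homomorphism $d_{ij}\colon A^{\miu}\to A$ together with the fact that $[O_A]$ is proportional to $\theta^{2}$ for a symmetric polarization $\theta\in\Pic^{s}(A)_{\Q}$ (this is Beauville's Corollaire~2, equivalently your observation that $\CH^{2}_{(0)}(A)=\Q\cdot[O_A]$), after which $\pr_i^*(L)$ is already of the required form. Your worry about closure under products and the passage to targets $A^{k}$ is unnecessary, since the corollary only asserts that $\beta$ is a \emph{polynomial} in cycles of the form $\phi^*(L)$, and a polynomial in such polynomials is again one.
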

\begin{proof}
 It is enough to remark that the big diagonal $\Delta_{ij}$ is nothing but the pull-back of $O_A\in \CH_0(A)$ via the homomorphism
 \begin{eqnarray*}
  A^{\miu} &\to& A\\
  (x_1,\cdots,x_{l_{\miu}}) &\mapsto& x_i-x_j
 \end{eqnarray*}
and  $O_A\in \CH_0(A)$ is proportional to
$\theta^2$ for some symmetric polarization $\theta\in \Pic^{s}(A)_\Q$, \cf
\cite[Page 249, Corollaire 2]{MR726428}.
\end{proof}

Let us continue the proof of Theorem \ref{thm:main2}. Let
$B^0_{\miu}$ be the identity component of $B_{\miu}$,
$i_{\miu}\colon B^0_{\miu}\inj A^{\miu}$ and $p_{\miu}\colon
A^{\miu}\surj B^0_{\miu}$ be the inclusion and the splitting
constructed in Section \ref{sect:DeCM}. By assumption, we have equation (\ref{eqn:1}): $\beta|_{B_{\miu}}\equiv
0$, therefore $i_{\miu}^*(\beta)=\beta|_{B^0_{\miu}}\equiv 0$, hence
$$p_{\miu}^*\left(i_{\miu}^*(\beta)\right)\equiv 0.$$

On the other hand, since $i_{\miu}\circ
p_{\miu}\colon A^{\miu}\to A^{\miu}$ is an endomorphism of
$A^{\miu}$, Corollary \ref{cor:Voisinhard} implies that the
numerically trivial cycle $p_{\miu}^*\left(i_{\miu}^*(\beta)\right)$ is also a polynomial of cycles of the form $\phi^*\left(L\right)$, for some homomorphism of abelian varieties $\phi: A^{\miu}\to A$ and some $L\in \Pic^s(A)_\Q$. As a result, $p_{\miu}^*\left(i_{\miu}^*(\beta)\right)$ is in the
subalgebra of $\CH\left(A^{\miu}\right)$ generated by the first Chern classes of symmetric line bundles of $A^{\miu}$.

Therefore by the result of Moonen and O'Sullivan (Theorem \ref{thm:MO}),
$p_{\miu}^*\left(i_{\miu}^*(\beta)\right)\equiv 0$ implies
$p_{\miu}^*\left(i_{\miu}^*(\beta)\right)=0$. As a result,
\begin{equation}\label{eqn:2}
  \beta|_{B^0_{\miu}}=i_{\miu}^*\left(p_{\miu}^*\left(i_{\miu}^*(\beta)\right)\right)=0.
\end{equation}
Recall that $d=\gcd(\miu_1, \cdots, \miu_l)$ and  for any $d$-torsion point $t$ of $A$, the automorphism $\tau_{t'}$ constructed at the end of Section \ref{sect:DeCM} translates $B^0_{\miu}$ to $B^t_{\miu}$, therefore (\ref{eqn:2}) implies that
$$\tau_{t'*}(\beta)|_{B^t_{\miu}}=0 \text{  for any  }t\in A[d].$$
However, $\tau_{t'*}(\beta)=\tau_{t'*}\left(E_{\miu*}(\gamma)\right)=E_{\miu*}\left(\tau_{t'*}(\gamma)\right)$ by the compatibility of the actions of $\tau_{t'}$ on $A^{[n+1]}$ and on $A^{\miu}$, as explained in Section \ref{sect:DeCM}.

We thus obtain that for any $t\in A[d]$,
$$\Gamma_{\miu*}\left(z\right)|_{B^t_{\miu}}=\Gamma_{\miu*}\left(\tau_{t'*}\gamma|_{K_n}\right)|_{B^t_{\miu}}
=\left(E_{\miu*}(\tau_{t'*}\gamma)\right)|_{B^t_{\miu}}=\tau_{t'*}(\beta)|_{B^t_{\miu}}=0.$$
Here the first equality comes from Lemma \ref{lemma:restriction}(ii), see also Remark \ref{rmk:numericalL}; the second equality uses Lemma \ref{lemma: intersection}.
Since $B_{\miu}$ is the disjoint union of all $B_{\miu}^t$ for all $t\in A[d]$, we have
$$\Gamma_{\miu*}(z)=0 ~~\text{ for any }~ \miu\in \sP(n+1).$$
Using De Cataldo-Migliorini's result (rather Corollary
\ref{cor:CataldoMigliorini}), we have for $z=\gamma|_{K_n}$ as before,
$$z=\sum_{\miu\in \sP(n+1)}c_{\miu}~\Gamma_{\miu}^*\circ \Gamma_{\miu *}\left(z\right)=0.$$
The proof of Theorem \ref{thm:main2} is complete if one admits
Proposition \ref{prop:Voisinhard}.

\section{The proof of Proposition \ref{prop:Voisinhard}}
The proof of Proposition \ref{prop:Voisinhard} is quite technical but analogous to
that of \cite[Proposition 2.4]{MR2435839}. For the convenience of readers, we give in this section a more or less self-contained proof closely following \cite{MR2435839}, emphasizing the differences from the case in
\cite{MR2435839}. The author thanks Claire Voisin for allowing him to reproduce her arguments.
For simplicity, we switch from $n+1$ to $n$. Let $A$ still be an abelian surface.

There are two natural vector bundles on $A^{[n]}$. The first one is
the tangent bundle $T_n\deff T_{A^{[n]}}$, the second one is the
rank $n$ vector bundle $\rO_n\deff \pr_{1 *}\left(\sO_{U_n}\right)$, where
$U_n\subset A^{[n]}\times A$ is the universal subscheme and
$\pr_1\colon A^{[n]}\times A\to A^{[n]}$ is the first projection. As
$c_1\left(\rO_n\right)=-\frac{1}{2}\delta$, we can generalize Proposition \ref{prop:Voisinhard} by
proving it for any $\gamma$ a polynomial of $c_1\left(\tilde L\right)$ for some $L\in
\Pic^s(A)_\Q$, $c_i\left(\rO_n\right)$ for some $i\in \N$, and $c_j\left(T_n\right)$
for some $j\in \N$.

For any $L\in \Pic^s\left(A\right)$, by the construction of $E_{\miu}\subset
A^{[n]}\times A^{\miu}$, the restriction $\pr_1^*\left(\tilde
L\right)|_{E_{\miu}}$ is the pull back of the line bundle
$L^{\otimes\miu_1}\boxtimes\cdots\boxtimes L^{\otimes\miu_l}$ on
$A^{\miu}$. Hence by projection formula, we only need to prove the
following

\begin{prop}\label{prop:1}
For $\gamma\in \CH(A^{[n]})$ a polynomial with rational coefficients of cycles of the forms:
\begin{itemize}
  \item $c_i\left(\rO_n\right)$ for some $i\in \N$;
  \item $c_j\left(T_n\right)$ for some $j\in \N$,
\end{itemize}
the algebraic cycle $\beta=E_{\miu*}(\gamma)\in \CH\left(A^{\miu}\right)$ is
a polynomial with rational coefficients in the big diagonals
$\Delta_{ij}$ of $A^{\miu}=A^{l_{\miu}}$ for $1\leq i\neq j\leq
l_{\miu}$.
\end{prop}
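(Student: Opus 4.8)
The plan is to follow Voisin's proof of \cite[Proposition 2.4]{MR2435839}, adapted from the K3 case to the abelian surface $A$; the only change — in fact a simplification — is that $T_A$ is trivial, so that the ``abelian directions'' carry no Chern classes (in the K3 case one must keep track of $c_2(T_S)=24\,o$). Write $\Sigma\deff E_{\miu}\subset A^{[n]}\times A^{\miu}$, with projections $a\colon\Sigma\to A^{[n]}$ and $b\colon\Sigma\to A^{\miu}$, so that $\beta=b_*(a^*\gamma)$ and $a^*\gamma$ is the given polynomial in the classes $c_i(a^*\rO_n)$ and $c_j(a^*T_n)$. I would argue by induction on the length $l=l_{\miu}$; the case $l=1$ is the base, where $A^{\miu}=A$ has no big diagonal and one checks directly that $\beta$ is a rational multiple of $[A]$.

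The first step is to describe $\Sigma$ over $A^{\miu}$. Let $U\subset A^{\miu}$ be the complement of the union of the big diagonals and $V\deff b^{-1}(U)$. Over $U$ the pullback to $\Sigma$ of the universal length-$n$ subscheme decomposes globally into $l$ punctual families of lengths $\miu_1,\dots,\miu_l$ supported at the distinct points $x_1,\dots,x_l$; translating the $i$-th family by $-x_i$ gives an isomorphism $V\cong U\times\prod_{i=1}^{l}\mathrm{Hilb}^{\miu_i}_0(A)$, where $\mathrm{Hilb}^{\miu_i}_0(A)$ is the (projective) punctual Hilbert scheme. Under this identification both $a^*\rO_n|_V$ and $a^*T_n|_V$ split, in $K$-theory, as direct sums over $i$ of bundles pulled back from the $i$-th factor: here one uses that near the locus of ``separated'' subschemes $A^{[n]}$ is Zariski-locally a product of the $A^{[\miu_i]}$, that the tautological bundle and the normal bundle of the locus of punctual subschemes inside $A^{[\miu_i]}$ are invariant under translation along $A$, and — crucially — that $T_A$ is trivial, so the tangent directions along $A$ contribute nothing to Chern classes. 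Hence $a^*\gamma|_V=\pr^*\Psi$ for a class $\Psi$ on $\prod_i\mathrm{Hilb}^{\miu_i}_0(A)$ independent of $U$, and pushing forward along the proper projection $V=U\times\prod_i\mathrm{Hilb}^{\miu_i}_0(A)\to U$ shows that $\beta|_U$ is a rational multiple of $[U]$. Consequently $\beta-c\,[A^{\miu}]$, for the corresponding constant $c$, is supported on $\bigcup_{i<j}\Delta_{ij}$.

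It remains to identify this supported part with a polynomial in the $\Delta_{ij}$. The key geometric observation is that, as a set (equivalently as a reduced scheme), $b^{-1}(\Delta_{ij})$ is precisely the incidence variety $E_{\nu}$ attached to the partition $\nu$ of $n$ obtained from $\miu$ by fusing the parts $\miu_i$ and $\miu_j$ into a single part $\miu_i+\miu_j$ — indeed on $\Delta_{ij}\cong A^{l-1}$ the weighted-sum map to $A^{(n)}$ is exactly the one defining $A^{\nu}$ — and $a$ restricts on it to the corresponding map $E_{\nu}\to A^{[n]}$. Since $\nu$ has length $l-1$, the inductive hypothesis applies to it; combined with the formula $\iota_{ij*}(\iota_{ij}^*\eta)=\eta\cdot[\Delta_{ij}]$ for the inclusion $\iota_{ij}\colon\Delta_{ij}\inj A^{\miu}$ and with the fact that every big diagonal of $\Delta_{ij}$ is the restriction of one of $A^{\miu}$, this shows that the contribution of each $\Delta_{ij}$ to $\beta$ is a polynomial in the big diagonals of $A^{\miu}$. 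Adding back $c\,[A^{\miu}]$ closes the induction.

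The main difficulty lies in this last step. The scheme $b^{-1}(\Delta_{ij})$ has dimension $n+l-1=\dim E_{\nu}$, hence codimension $1$ rather than the expected codimension $2$ in $\Sigma$, so computing the restriction of $\beta$ (equivalently of $a^*\gamma$) along $\Delta_{ij}$ requires a refined Gysin map carrying a rank-$1$ excess term; and since the incidence schemes are non-reduced, one must verify that this excess contribution is again controlled — a polynomial in Chern classes of $\rO_n$ and $T_n$ up to terms supported on deeper diagonals — which in practice may require strengthening the inductive statement. This is exactly the technical heart of \cite[Proposition 2.4]{MR2435839}; carrying it over to $A$ is routine once the triviality of $T_A$ is exploited, the abelian case being strictly easier than the K3 one.
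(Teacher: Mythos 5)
Your first two steps (the product decomposition of $b^{-1}(U)$ over the open complement $U$ of the diagonals, and the conclusion that $\beta-c\,[A^{\miu}]$ is supported on $\bigcup_{i<j}\Delta_{ij}$) are fine, but the inductive step that is supposed to finish the proof is not a proof, and the difficulty you defer is worse than you acknowledge. Knowing that $\beta-c\,[A^{\miu}]=\sum_{i<j}\iota_{ij*}(\beta_{ij})$ for \emph{some} classes $\beta_{ij}\in \CH(\Delta_{ij})$ gives you no handle on the $\beta_{ij}$: the set-theoretic identification $b^{-1}(\Delta_{ij})_{red}=E_{\nu}$ does not determine the class supported on $\Delta_{ij}$, because of the non-reduced structure and the codimension defect you yourself point out. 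Worse, the tool you propose for extracting $\beta_{ij}$ cannot work: the normal bundle of $\Delta_{ij}\cong A^{l-1}$ in $A^{\miu}$ is a \emph{trivial} rank-$2$ bundle, so $\iota_{ij}^*\iota_{ij*}(\alpha)=c_2(N)\cdot\alpha=0$ for every $\alpha$; restriction to $\Delta_{ij}$ annihilates precisely the part of $\beta$ you are trying to identify, so no induction on the length of $\miu$ can be closed by restricting to the diagonals. A correct version of this strategy would require a stratified excess-intersection analysis over all the deeper diagonal strata, with a substantially strengthened inductive statement, and none of that is ``routine.''

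Your framing of this as ``exactly the technical heart of Voisin's Proposition 2.4'' is also inaccurate, and this is where the paper diverges from you. The actual proof does not stratify $A^{\miu}$ at all: it strengthens the statement in a different direction (Proposition 5.3, working on $A^{[n]}\times A^m$ and enlarging the list of allowed classes to include $c_j(\rI_n)$ and pullbacks of $\Delta_A$) and inducts on $n$, not on $l_{\miu}$. The induction is run through the nested Hilbert scheme $A^{[n-1,n]}$, using that $\psi\colon A^{[n-1,n]}\to A^{[n]}$ is generically finite of degree $n$ and that $\sigma=(\phi,\rho)\colon A^{[n-1,n]}\to A^{[n-1]}\times A$ is the blow-up along $U_{n-1}$, together with the Ellingsrud--G\"ottsche--Lehn identities expressing $\psi^!T_n$, $\psi^!\rO_n$, $\psi_1^!\rI_n$ in terms of the corresponding objects on $A^{[n-1]}$ and the class $c_1(\rL)$, whose powers push down to Chern classes of $\rI_{n-1}$. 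This machinery exists precisely to avoid the excess computation along the big diagonals that your argument runs into. As written, your proposal has a genuine gap at its decisive step.
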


To show Proposition \ref{prop:1}, we actually prove the more general Proposition \ref{prop:2} below (note that Proposition \ref{prop:1} corresponds to the special case $m=0$), which allows us to do induction on $n$. Let us introduce some notation first: for any $m\in \N$, let $E_{\miu,
m}$ be the correspondence between $A^{[n]}\times A^m$ and
$A^{\miu}\times A^{m}$ defined by $E_{\miu, m}\deff E_{\miu}\times
\Delta_{A^m}$. Let $\rI_n$ be the ideal sheaf of the universal
subscheme $U_n\subset A^{[n]}\times A$. For any $1\leq i\neq j\leq m$, we
denote by $\pr_0\colon A^{[n]}\times A^m\to A^{[n]}$, \resp
$\pr_i\colon A^{[n]}\times A^m\to A$, \resp $\pr_{0i}\colon
A^{[n]}\times A^m\to A^{[n]}\times A$, \resp $\pr_{ij}\colon A^{[n]}\times A^m\to A\times A$ the projection onto  the  factor $A^{[n]}$, \resp the $i$-th factor of $A^m$, \resp the product of the factor $A^{[n]}$ and the $i$-th factor of $A^m$, \resp the product of the $i$-th and $j$-th factors of $A^m$.

\begin{prop}\label{prop:2}
For $\gamma\in \CH(A^{[n]}\times A^m)$ a polynomial with rational coefficients of cycles of the
forms:
\begin{itemize}
  \item $\pr_0^*\left(c_j\left(\rO_n\right)\right)$ for some $j\in \N$;
  \item $\pr_0^*\left(c_j\left(T_n\right)\right)$ for some $j\in \N$;
  \item $\pr_{0i}^*\left(c_j\left(\rI_n\right)\right)$ for some $1\leq i\leq m$ and $j\in \N$;
  \item $\pr_{ij}^*(\Delta_A)$ for some $1\leq i\neq j\leq m$,
\end{itemize}
the algebraic cycle $E_{\miu,m *}(\gamma)\in \CH\left(A^{l_{\miu}+m}\right)$
is a polynomial with rational coefficients in the big diagonals
$\Delta_{ij}$ of $A^{l_{\miu}+m}$, for $1\leq i\neq j\leq
l_{\miu}+m$.
\end{prop}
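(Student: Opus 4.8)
The plan is to prove Proposition~\ref{prop:2} by induction on $n$, following the structure of \cite[Proof of Proposition 2.4]{MR2435839} but tracking the differences coming from working with an abelian surface instead of a K3 surface (in particular, symmetric line bundles now genuinely appear, whereas on a K3 there are no interesting divisors entering the computation in the same way, and the vanishing $c_1(T_A)=0$, $c_2(T_A)=0$ simplifies some Chern-class bookkeeping). The base case $n=1$ is immediate: $A^{[1]}=A$, $E_{(1),m}=\Delta_A\times\Delta_{A^m}$, $\rO_1=\sO_A$, $\rI_1$ restricted to the diagonal is trivial, $T_1=T_A$ is trivial, so every admissible generator pushes forward to a polynomial in big diagonals of $A^{1+m}$, and these push forward correctly under the (proper, finite) map $E_{(1),m}$.

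For the inductive step, fix $n\geq 2$ and a partition $\miu$ of $n$. The key geometric input is the standard incidence/nested Hilbert scheme description: one stratifies $E_{\miu}$ (or rather relates $A^{[n]}$ to $A^{[n-1]}$) via the nested Hilbert scheme $A^{[n-1,n]}=\{(\xi',\xi): \xi'\subset\xi\}$, which carries a natural "residual point" map $\rho\colon A^{[n-1,n]}\to A$ together with the forgetful maps to $A^{[n]}$ and $A^{[n-1]}$. The cycle $\gamma$ pulled back to $A^{[n-1,n]}\times A^m$ is again a polynomial in admissible classes for $A^{[n-1]}$ together with the class $\rho$ records — concretely, one uses the exact sequences relating $\rO_n$, $\rO_{n-1}$ and the residual point, and the known behaviour of $T_{A^{[n]}}$ under the nested Hilbert scheme (here is exactly where one invokes that $T_A$ is trivial, so that the relevant comparison sequences for the tangent bundles do not introduce uncontrolled terms; on a K3 Voisin handles the analogous point using $c_1(T_S)=0$). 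Thus $\gamma$ is pushed/pulled into a class to which the induction hypothesis applies on $A^{[n-1]}\times A^{m+1}$ — the extra $A$ factor being the residual point — with the combinatorial bookkeeping of which partition of $n-1$ one lands in handled by how $\miu$ is obtained from a partition of $n-1$ by adding a box.

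The main obstacle, and the step that requires genuine care, is exactly this reduction: expressing $E_{\miu *}(\gamma)$ as a sum over the ways $\miu$ arises from partitions $\nu$ of $n-1$, controlling the correspondences $E_{\nu,m+1}$ that appear, and verifying that each admissible generator class on $A^{[n]}\times A^m$ restricts, under the nested Hilbert scheme and the residual-point map, to an admissible generator class on $A^{[n-1]}\times A^{m+1}$ (up to big-diagonal corrections). This is where one must carefully use: (a) the blow-up/projective-bundle structure of $A^{[n-1,n]}\to A^{[n-1]}$ and the resulting formula for its Chow/cohomology; (b) the exact sequence $0\to\rI_n\to\rI_{n-1}\to (\text{skyscraper at residual point})\to 0$ pulled back appropriately, which converts $c_j(\rI_n)$-classes into $c_j(\rI_{n-1})$-classes plus classes involving $\pr_{0i}^*$ of the residual point (i.e. a $\Delta_A$-type class after adding the residual-point factor); and (c) the analogous comparison for $T_n$ versus $T_{n-1}$, which on an abelian surface is cleaner because $T_A$ is flat. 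Once the reduction is set up, the induction hypothesis on $A^{[n-1]}\times A^{m+1}$ gives a polynomial in big diagonals of $A^{l_\nu+m+1}$; pushing the extra $A$-factor back down (summing the residual point into the appropriate block according to how $\miu$ refines $\nu$, i.e. applying a partial-diagonal or small-diagonal pushforward) turns big diagonals of $A^{l_\nu+m+1}$ into big diagonals of $A^{l_\miu+m}$, completing the step. I would also remark, as in \cite{MR2435839}, that the case $m=0$ applied to the partition at hand recovers Proposition~\ref{prop:1}, and that Proposition~\ref{prop:Voisinhard} then follows by the projection-formula reduction already explained before the statement of Proposition~\ref{prop:1}.
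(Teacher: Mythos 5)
Your proposal follows essentially the same route as the paper's proof: induction on $n$ via the nested Hilbert scheme $A^{[n-1,n]}$, with the residual-point map $\rho$ supplying the extra factor of $A$ so that the induction hypothesis applies on $A^{[n-1]}\times A^{m+1}$, and with the comparison of $T_n$, $\rO_n$, $\rI_n$ to their $(n-1)$-counterparts (the Ellingsrud--G\"ottsche--Lehn identities in $K$-theory, together with the pushforward formula $\sigma_*\left(c_1(\rL)^i\right)=(-1)^ic_i(-\rI_{n-1})$) doing exactly the work you describe in your points (a)--(c). The only cosmetic difference is bookkeeping: rather than summing over the ways $\miu$ arises from partitions of $n-1$, the paper picks the single component of $\left(A^{\miu}\times_{A^{(n)}}A^{[n-1,n]}\right)_{red}$ on which the residue point is $x_n$, which maps birationally (degree $1$) onto $E_{\miu}$, thereby avoiding any overcounting.
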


The main tool to prove this proposition is the so-called \emph{nested Hilbert schemes}, which we briefly recall here (\cf \cite{MR1711344}).
By definition, the nested Hilbert scheme is the incidence variety $$A^{[n-1, n]}:= \left\{(z', z)\in A^{[n-1]}\times A^{[n]}~|~ z'\subset z\right\},$$ where $z'\subset z$ means $z'$ is a closed subscheme of $z$. It admits natural projections to $A^{[n-1]}$ and $A^{[n]}$, and also a natural morphism to $A$ which associates the residue point to such a pair of subschemes $(z'\subset z)$. The situation is summarized by the following diagram:
\begin{equation}\label{diagram:1}
\xymatrix{
A^{[n-1]} & A^{[n-1, n]} \ar[l]_{\phi} \ar[r]^{\psi} \ar[d]^{\rho} & A^{[n]} \\
&A&
}
\end{equation}

We collect here some basic properties of the nested Hilbert scheme (\cf \cite{MR1432198}, \cite{MR2095898}, \cite{MR1711344}): 
\begin{itemize}
\item The nested Hilbert scheme $A^{[n-1, n]}$ is irreducible and smooth  of dimension $2n$ (\cf \cite{CheahThesis}).
\item The natural morphism $\sigma:=(\phi, \rho)\colon A^{[n-1, n]}\to A^{[n-1]}\times A$ is the blow up along the universal subscheme $U_{n-1}\subset A^{[n-1]}\times A$. Define a line bundle $\rL:=\sO_{A^{[n-1, n]}}(-E)$ on $A^{[n-1, n]}$, where $E$ is the exceptional divisor of the blow up. 
\item The natural morphism $\sigma=(\phi, \rho)\colon A^{[n-1, n]}\to A^{[n-1]}\times A$ is also identified with the projection $$\P(\rI_{n-1})=\Proj\left(\Sym \rI_{n-1}\right)\to A^{[n-1]}\times A.$$ Then $\rL$ is identified with  $\sO_{\P(\rI_{n-1})}(1)$ .
\item The morphism $\psi$ is generically finite of degree $n$.
\item The natural morphism $(\psi, \rho) \colon A^{[n-1, n]}\to A^{[n]}\times A$ is identified with the projection $$\P(\omega_{U_{n}})\to A^{[n]}\times A,$$ where $\omega_{U_{n}}$ is the relative dualising sheaf (supported on $U_{n}$) of the universal subscheme $U_{n}\subset A^{[n]}\times A$.
\end{itemize}

Before we return to the proof of Proposition \ref{prop:2}, we do the following calculation:
\begin{lemma}\label{lemma:chern}
Let $A$ be an abelian surface, $\Delta_A\subset A\times A$ be the diagonal. Then in $\CH(A\times A)$, $c_1\left(\sO_{\Delta_A}\right)=c_3\left(\sO_{\Delta_A}\right)=c_4\left(\sO_{\Delta_A}\right)=0$, and    $c_2\left(\sO_{\Delta_A}\right)=-\Delta_A$.
\end{lemma}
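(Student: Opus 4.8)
The plan is to reduce the whole computation to the structure sheaf of a point, via the difference morphism on $A$. First I would introduce
$$\delta\colon A\times A\longrightarrow A,\qquad (x,y)\mapsto x-y,$$
which, being a surjective homomorphism of abelian varieties, is smooth, hence flat, and whose scheme-theoretic fibre over the origin $O_A$ is precisely the reduced diagonal $\Delta_A$. Consequently $\sO_{\Delta_A}=\delta^{*}\!\left(\sO_{O_A}\right)$ as coherent sheaves on $A\times A$, and since Chern classes of coherent sheaves commute with flat pull-back one gets $c_i\!\left(\sO_{\Delta_A}\right)=\delta^{*}\!\left(c_i\!\left(\sO_{O_A}\right)\right)$ for every $i$. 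Because $A$ is a surface, $\CH^{j}(A)=0$ for $j\geq 3$, so this already forces $c_3\!\left(\sO_{\Delta_A}\right)=c_4\!\left(\sO_{\Delta_A}\right)=0$, and the problem collapses to computing $c_1$ and $c_2$ of the structure sheaf of a point on $A$.

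Second, I would compute those two classes by Grothendieck--Riemann--Roch applied to the inclusion $j\colon O_A\inj A$: since $T_A$ is trivial (so $\td(T_A)=1$) and $\ch\!\left(\sO_{O_A}\right)=1$, GRR gives $\ch\!\left(j_{*}\sO_{O_A}\right)=j_{*}(1)=[O_A]\in\CH^{2}(A)$. Hence the Chern character of $\sO_{O_A}$ has vanishing components in degrees $0$ and $1$ and degree-$2$ component $[O_A]$; inverting the standard (virtual rank $0$) dictionary $\ch_1=c_1$, $\ch_2=\tfrac12(c_1^{2}-2c_2)$ yields $c_1\!\left(\sO_{O_A}\right)=0$ and $c_2\!\left(\sO_{O_A}\right)=-[O_A]$. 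Pulling back along $\delta$ and using $\delta^{*}[O_A]=[\delta^{-1}(O_A)]=[\Delta_A]$ then gives $c_1\!\left(\sO_{\Delta_A}\right)=0$ and $c_2\!\left(\sO_{\Delta_A}\right)=-\Delta_A$, finishing the proof.

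As a variant that applies GRR directly on the diagonal, one can use that the normal bundle $N_{\Delta_A/A\times A}\cong T_A$ is trivial, as are $T_{\Delta_A}$ and $T_{A\times A}$; then GRR for $\iota\colon\Delta_A\inj A\times A$ gives $\ch\!\left(\iota_{*}\sO_{\Delta_A}\right)=[\Delta_A]$, and the Chern-character-to-Chern-class dictionary produces $c_1=0$, $c_2=-\Delta_A$, $c_3=0$, $c_4=\tfrac12[\Delta_A]^{2}$; one closes the argument with $[\Delta_A]^{2}=\iota_{*}\!\left(\iota^{*}[\Delta_A]\right)=\iota_{*}\!\left(c_2(T_A)\right)=0$ by the self-intersection formula, i.e. the vanishing of the topological Euler characteristic of $A$.

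I do not expect a genuine obstacle: the statement is in essence a consequence of the triviality of $T_A$. The only points that need a little care are the identification $\delta^{*}\!\left(\sO_{O_A}\right)=\sO_{\Delta_A}$ together with flat-pullback compatibility of Chern classes of coherent sheaves, and the routine bookkeeping in passing from the Chern character to the Chern classes of a class of virtual rank zero.
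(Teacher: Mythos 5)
Your proposal is correct. The paper's own proof is precisely your ``variant'': a one-line application of Grothendieck--Riemann--Roch to the diagonal embedding, using $\td(T_A)=\td(T_{A\times A})=1$ to get $\ch(\sO_{\Delta_A})=\Delta_A$, after which the Chern classes are read off. Your primary route --- identifying $\sO_{\Delta_A}$ with $\delta^*(\sO_{O_A})$ for the difference homomorphism $\delta(x,y)=x-y$ and using compatibility of Chern classes of coherent sheaves with flat pullback --- is genuinely different in its organization, and it buys something concrete: the vanishing of $c_3$ and $c_4$ becomes formal, since $\CH^j(A)=0$ for $j\geq 3$ on a surface. In the direct GRR computation on $A\times A$ the rank-zero dictionary only gives $c_4=\tfrac12\Delta_A^2$, and one still must check $\Delta_A^2=0$ (via the self-intersection formula and the triviality of $T_A$, i.e.\ the vanishing of the topological Euler characteristic of $A$) --- a step your variant supplies explicitly and which the paper's ``the calculation of Chern classes follows'' leaves implicit. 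Both arguments ultimately rest on the same input (triviality of $T_A$ and GRR for a regular embedding with trivial Todd classes); yours trades the computation on the fourfold $A\times A$ for one on the surface $A$ itself, at the modest cost of justifying $\delta^*(\sO_{O_A})=\sO_{\Delta_A}$ and the flat-pullback compatibility, both of which are standard.
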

\begin{proof}
We apply the Grothendieck-Riemann-Roch formula to the diagonal embedding $A\inj A\times A$, we get (since $\td\left(T_A\right)=\td\left(T_{A\times A}\right)=1$): $\ch(\sO_{\Delta_A})=\Delta_A \in \CH^2(A\times A)$, and the calculation of Chern classes follows.
\end{proof}

\begin{proof}[Proof of Proposition \ref{prop:2}]
We do induction on $n$. When $n=0$, there is nothing to prove. \\
When $n=1$, the only possible $\miu=(1)$, hence $E_{\miu, m}$ is the identity correspondence of $A^{m+1}$. Since $\rO_1$ is the structure sheaf, $T_1=T_A$ is trivial and $\rI_1=\sI_{\Delta_A}$ is the ideal sheaf of the diagonal, whose Chern classes are either zero or $\Delta_A$ (by Lemma \ref{lemma:chern}), Proposition \ref{prop:2} is verified in this case.\\
Now assuming the statement holds for $n-1$, let us prove it for $n$. In the rest of the proof, a \emph{partition} of $n$ means a grouping of the set $\{1, 2, \cdots, n\}$ rather than just a decreasing sequence of natural numbers with sum $n$ as before. More precisely, a partition $\miu$ of length $l$ is a sequence of mutually exclusive subsets $\miu_1, \cdots, \miu_l\in 2^{\{1,\cdots, n\}}$ such that $\coprod_{j=1}^l \miu_j=\{1, \cdots, n\}$. Thus we can naturally identify $A^{\miu}:= A^{l_{\miu}}$ with the diagonal $\left\{(x_1, \dots, x_n)\in A^n~|~x_i=x_j \text{ if $i, j\in\miu_k$ for some $k$}\right\}\subset A^n$.\\
 Consider the reduced fibre product $(A^{\miu}\times_{A^{(n)}}A^{[n-1, n]})_{red}$, which has $l_{\miu}$ irreducible components dominating $A^{\miu}$, depending on the choice of the residue point. Let us pick one component, for example, the one where over a general point $(x_1, \cdots, x_n)\in A^{\miu}$, the residue point is $x_n$. Let $\miu'$ be the partition of $\{1,2, \cdots, n-1\}$ given by $\miu_i':=\miu_i\backslash\{n\}$ for all $i$. Let us call this irreducible component $E_{\miu, \miu'}$. Set theoretically, $$E_{\miu, \miu'}=\left\{((x_1,\cdots, x_n), z'\subset z)\in A^{\miu}\times A^{[n-1, n]}~|~ [z']=x_1+\cdots+x_{n-1}, [z]=x_1+\cdots+x_n \right\};$$
 $$E_{\miu}=\left\{((x_1,\cdots, x_n), z)\in A^{\miu}\times A^{[n]}~|~ [z]=x_1+\cdots+x_n \right\};$$
 $$E_{\miu'}=\left\{((x_1,\cdots, x_{n-1}), z')\in A^{\miu'}\times A^{[n-1]}~|~ [z']=x_1+\cdots+x_{n-1} \right\},$$
  where $[-]$ means the Hilbert-Chow morphism. \\
  We have the following commutative diagram with natural morphisms:
  \begin{equation}\label{diagram:2}
  \xymatrix{
  A^{\miu'}\times A & & A^{\miu} \ar[ll]_{\iota}\\
  E_{\miu'}\times A \ar[u]^{g_{\miu',1}} \ar[d]_{f_{\miu',1}} & E_{\miu, \miu'} \ar[l]_{\chi'} \ar[r]^{\chi} \ar[d]^{p} & E_{\miu} \ar[u]_{g_{\miu}} \ar[d]^{f_{\miu}}\\
  A^{[n-1]}\times A & A^{[n-1, n]}\ar[l]^(0.4){\sigma=(\phi, \rho)} \ar[r]_{\psi} & A^{[n]}
  }
  \end{equation}
Here and in the sequel, for any morphism $h$ and any $m\in\N$, we denote by $h_m$ the morphism $h\times \id_{A^m}$. In the above diagram, $f_{\miu}$, $g_{\miu}$, $f_{\miu'}$, $g_{\miu'}$ are the natural projections; $\chi=(\id_{A^{\miu}}, \psi)\colon \left((x_1,\cdots, x_n), z'\subset z\right)\mapsto ((x_1,\cdots, x_n), z)$, $\chi'=(\pr_{A^{\miu'}}, \sigma)\colon \left((x_1,\cdots, x_n), z'\subset z\right)\mapsto \left((x_1,\cdots, x_{n-1}), z', x_n\right)$ both are of degree 1; and finally $\iota\colon (x_1,\cdots, x_n) \mapsto \left((x_1,\cdots, x_{n-1}),x_n\right)$ is either an isomorphism or a diagonal embedding depending on whether $n$ is the only one element in the subset of partition where $n$ belongs to.

Here comes the key setting for the induction process. For any $m\in \N$, we make a product of the above diagram with $A^m$ and replace any morphism $h$ by $h_m:=h\times \id_{A^m}$: 
\begin{equation}\label{diagram:3}
  \xymatrix{
  A^{\miu'}\times A^{m+1} & & A^{\miu}\times A^m \ar[ll]_{\iota_m}\\
  E_{\miu'}\times A^{m+1} \ar[u]^{g_{\miu',m+1}} \ar[d]_{f_{\miu',m+1}} & E_{\miu, \miu'}\times A^m \ar[l]_{\chi'_m} \ar[r]^{\chi_m} \ar[d]^{p_m} & E_{\miu}\times A^m \ar[u]_{g_{\miu,m}} \ar[d]^{f_{\miu,m}}\\
  A^{[n-1]}\times A^{m+1} & A^{[n-1, n]}\times A^m\ar[l]^{\sigma_m} \ar[r]_{\psi_m} & A^{[n]}\times A^m
  }
  \end{equation}

Given $\gamma\in \CH(A^{[n]}\times A^m)$ a polynomial expression as in Proposition \ref{prop:2}, we want to prove that $g_{\miu,m *}\left(f_{\miu,m}^* \gamma\right)\in \CH(A^{\miu}\times A^m)$ is a polynomial of big diagonals of $A^{l_{\miu}+m}$. Since $\iota_m$ is either an isomorphism or a diagonal embedding, it suffices to prove the same thing for $\iota_{m *}\circ g_{\miu,m *}\left(f_{\miu,m}^* \gamma\right)\in \CH(A^{\miu'}\times A^{m+1})$. However,
\begin{eqnarray*}
\iota_{m *}\circ g_{\miu,m *}\left(f_{\miu,m}^* \gamma\right)&=& \iota_{m *}\circ g_{\miu,m *}\circ \chi_{m *}\circ \chi_{m}^*\circ f_{\miu,m}^*\left( \gamma\right)~~ \text{(since $\chi_m$ is of degree 1)}\\
&=& g_{\miu',m+1 *}\circ\chi'_{m *}\circ \chi_{m}^*\circ f_{\miu,m}^*\left( \gamma\right)\\
&=& g_{\miu',m+1 *}\circ\chi'_{m *}\circ p_{m}^*\circ \psi_{m}^*\left(\gamma\right)\\
&=& g_{\miu',m+1 *}\circ f_{\miu',m+1}^*\circ\sigma_{m *} \circ \psi_{m}^*\left(\gamma\right) ~~~\text{(by \cite[Page 626 (2.13)]{MR2435839})}.\\
\end{eqnarray*}
Using the induction hypothesis (since $\miu'$ is a partition of $n-1$), we find that to finish the proof, it is enough to verify
\begin{prop}\label{prop:recurrence}
If $\gamma\in \CH(A^{[n]}\times A^m)$ is a polynomial expression in the cycles of the following forms:
\begin{itemize}
  \item $\pr_0^*\left(c_j\left(\rO_n\right)\right)$ for some $j\in \N$;
  \item $\pr_0^*\left(c_j\left(T_n\right)\right)$ for some $j\in \N$;
  \item $\pr_{0i}^*\left(c_j\left(\rI_n\right)\right)$ for some $1\leq i\leq m$ and $j\in \N$;
  \item $\pr_{ij}^*\left(\Delta_{A}\right)$ for some $1\leq i\neq j\leq m$.
\end{itemize}
then $\sigma_{m *} \circ \psi_{m}^*\left(\gamma\right)\in \CH(A^{[n-1]}\times A^{m+1})$ is a polynomial in cycles of these four forms with $n$ replaced by $n-1$ and $m$ replaced by $m+1$.
\end{prop}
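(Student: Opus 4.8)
The plan is to reduce everything to the geometry of the maps $\psi_m$ and $\sigma_m$ in diagram (\ref{diagram:3}), together with the known identification of $\sigma_m$ as a blow-up (equivalently, as a projectivization), and to check that each of the four listed types of generating cycles behaves well under pullback along $\psi_m$ and pushforward along $\sigma_m$. First I would note that $\sigma_m\circ\psi_m^*$ commutes with any base change in the $A^m$-factors and with the projections $\pr_{ij}$, so the cycles of the form $\pr_{ij}^*(\Delta_A)$ with $1\le i\ne j\le m$ are carried to cycles of the same form; and the cycles $\pr_{0i}^*(c_j(\rI_n))$ require understanding how the universal ideal sheaf $\rI_n$ pulls back under $\psi$, which is exactly the content of the blow-up/relative-dualising-sheaf description recalled before Lemma \ref{lemma:chern}: on $A^{[n-1,n]}$ one has a short exact sequence relating $\psi^*\rI_n$ (or its pullback to $A^{[n-1,n]}\times A$ via $\rho$) to $\pr^*\rI_{n-1}$, the line bundle $\rL=\sO(-E)$, and the structure sheaf of the residual point section $\rho$. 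This is the step where I would copy Voisin's computation (the analogue of \cite[2.3]{MR2435839}) essentially verbatim, the only change being that $A$ is an abelian surface rather than a K3.

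Next I would deal with the two `intrinsic' bundles $\rO_n$ and $T_n$. For $\rO_n$ the recursion is the cleanest: from the exact sequence $0\to \rI_n\to \sO\to \sO_{U_n}\to 0$ on $A^{[n]}\times A$ one gets $\rO_n=\pr_{1*}\sO_{U_n}$, and the nested Hilbert scheme interpolates between length $n-1$ and length $n$, so $\psi^*\rO_n$ fits into an exact sequence on $A^{[n-1,n]}$ with $\phi^*\rO_{n-1}$ and a line bundle built from $\rL$ and $\rho^*$ of the diagonal class; pushing forward by $\sigma$ and using that $\sigma$ is a blow-up with one-dimensional fibres over $U_{n-1}$, together with Lemma \ref{lemma:chern} to handle the exceptional terms, expresses $\sigma_{m*}\psi_m^*$ of any Chern class of $\rO_n$ in terms of $c_\bullet(\rO_{n-1})$, $c_\bullet(\rI_{n-1})$ and diagonals. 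For $T_n$ I would use the known exact sequence expressing $T_n$ via $\rO_n$ and $\rO_n^\vee$ (the description $T_{A^{[n]}}\cong \mathscr{H}om(\rI_n,\sO_{U_n})$ pushed forward, or equivalently Mukai's/Lehn's formula), so its Chern classes are universal polynomials in those of $\rO_n$ and hence reduce to the previous case; again $T_A$ being trivial makes this slightly simpler than in the K3 setting. Throughout, the projection formula and the commutativity of the squares in (\ref{diagram:3}) let me move the $A^m$ factors around freely.

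The main obstacle I expect is bookkeeping rather than conceptual: one must verify that the `error terms' produced when pushing forward along the blow-up $\sigma_m$ — i.e. the contributions supported on the exceptional divisor $E$ and on the various partial diagonals created by the residual-point map $\rho$ — are themselves expressible in the four allowed forms after the shift $n\mapsto n-1$, $m\mapsto m+1$. Concretely, the residual point gives a new copy of $A$ (the last factor of $A^{m+1}$), the class of the section of $\rho$ is (up to the line bundle $\rL$) the diagonal $\Delta_A$ between that new factor and the universal $A$, and $c_1(\rL)=-E$ enters linearly; keeping track of which Chern class of which bundle each monomial lands in, and checking the indices $1\le i\ne j\le l_{\mu'}+m+1$ come out right, is the delicate part. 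I would organize this by first proving the identity at the level of $K$-theory classes on $A^{[n-1,n]}\times A^m$ (so that Chern-class manipulations become formal), then applying $\sigma_{m*}$ and invoking Lemma \ref{lemma:chern} only at the very end to kill $c_1,c_3,c_4$ of $\sO_{\Delta_A}$ and turn $c_2$ into $-\Delta_A$. With Proposition \ref{prop:recurrence} in hand, the induction on $n$ set up before the statement closes the proof of Proposition \ref{prop:2}, hence of Propositions \ref{prop:1} and \ref{prop:Voisinhard}.
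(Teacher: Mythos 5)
Your overall strategy---run the recursion through the nested Hilbert scheme $A^{[n-1,n]}$, pull back along $\psi_m$, push forward along the blow-up $\sigma_m$, and clean up the exceptional and diagonal contributions with Lemma \ref{lemma:chern}---is exactly the paper's, and your treatment of $\rO_n$, $\rI_n$ and the classes $\pr_{ij}^*(\Delta_A)$ matches the identities $(ii)$, $(iii)$, $(iv)$ of Theorem \ref{thm:EGL} that the paper quotes from Ellingsrud--G\"ottsche--Lehn. The gap is in your treatment of $T_n$. You propose to bypass the tangent bundle entirely by claiming that the Chern classes of $T_{A^{[n]}}$ are universal polynomials in those of $\rO_n$, via the identification $T_{A^{[n]}}\cong \pr_{1*}\mathscr{H}\!om(\rI_n,\sO_{U_n})$. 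This reduction is unjustified and almost certainly false. In the Grothendieck group one has $T_{A^{[n]}}=\chi(\sO,\sO_{U_n})+\chi(\sO_{U_n},\sO)-\chi(\sO_{U_n},\sO_{U_n})$; the first two terms are controlled by $\rO_n$ (using triviality of $\omega_A$), but the self-Ext term $\chi(\sO_{U_n},\sO_{U_n})$ has Chern character $\pr_{1*}\bigl(\ch(\sO_{U_n})^{\vee}\cdot\ch(\sO_{U_n})\bigr)$, which is a genuinely different pushforward from $\pr_{1*}\ch(\sO_{U_n})=\ch(\rO_n)$ and is not a polynomial in it. (Already for a K3 surface and $n=1$ the analogous claim fails, since $c_2(T_S)\neq 0$ while $\rO_1=\sO_S$; the known closed formulas for $\ch(T_{S^{[n]}})$ involve the geometry of the universal subscheme beyond $\rO_n$.)

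The paper does not need, and does not use, any such reduction: it handles the tangent bundle head-on with the dedicated K-theoretic recursion on the nested Hilbert scheme,
\begin{equation*}
\psi^! T_n=\phi^! T_{n-1}+\rL\cdot \sigma^!\rI\dual_{n-1}+1
\end{equation*}
(Theorem \ref{thm:EGL}$(i)$), whose Chern classes are then polynomials in $c_1(\rL)$, $\phi^*c_j(T_{n-1})$ and $\sigma^*c_j(\rI_{n-1})$; the term $\sigma^!\rI\dual_{n-1}$ is precisely why the statement of Proposition \ref{prop:recurrence} must carry the ideal-sheaf classes $\pr_{0i}^*(c_j(\rI_n))$ along in the induction even when one only cares about $T_n$ and $\rO_n$ at the top level. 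To repair your argument you should replace the ``$T_n$ reduces to $\rO_n$'' step by this recursion (or prove an equivalent identity for $\psi^!T_n$ on $A^{[n-1,n]}$); everything else in your outline then goes through as in the paper.
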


This will follow essentially from the formulae below established in \cite{MR1795551}. We adopt the notation in Diagram (\ref{diagram:1}), (\ref{diagram:2}), (\ref{diagram:3}) and the definition of the line bundle $\rL$ after Diagram (\ref{diagram:1}). In our case of abelian surface, the formulae are simplified thanks to the fact that $T_A$ is trivial. 

\begin{thm}[\cite{MR1795551}, Proposition 2.3, Lemma 2.1, in the proof of Proposition 3.1 and Lemma 1.1]\label{thm:EGL}
We have the following equalities in the Grothendieck group $K_0(A^{[n-1, n]})$:
\begin{itemize}
 \item[$(i)$] $\psi^! T_n=\phi^!T_{n-1}+\rL\cdot \sigma^!\rI\dual_{n-1}+1$;
 \item[$(ii)$]  $\psi^!\rO_n=\phi^!\rO_{n-1}+\rL$;
\end{itemize}
an equality in the Grothendieck group $K_0(A^{[n-1, n]}\times A)$:
\begin{itemize}
 \item[$(iii)$] $\psi_1^!\rI_n=\phi_1^!\rI_{n-1}-(\rL\boxtimes \sO_A)\cdot \rho_1^!(\sO_{\Delta_A})$;
\end{itemize}
and an equality in the Chow group $\CH(A^{[n-1]}\times A)$:
\begin{itemize}
 \item[$(iv)$] $\sigma_*\left(c_1(\rL)^i\right)=(-1)^ic_i(-\rI_{n-1})$.
\end{itemize}
\end{thm}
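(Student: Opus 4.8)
The plan is to establish the four formulae exactly as in Ellingsrud--G\"ottsche--Lehn \cite{MR1795551}, where they are proved for an arbitrary smooth projective surface; the only extra observation needed here is that the hypotheses ``$T_A$ trivial'' and ``$K_A=\sO_A$'' collapse their general formulae to the clean shape stated. The common engine is a single short exact sequence on the nested Hilbert scheme relating the two pulled-back universal ideal sheaves, combined with the two descriptions of $A^{[n-1,n]}$ recalled above: as the blow-up $\sigma=(\phi,\rho)\colon A^{[n-1,n]}=\P(\rI_{n-1})\to A^{[n-1]}\times A$, carrying the tautological quotient $\sigma^*\rI_{n-1}\surj\rL$ with $\rL=\sO_{\P(\rI_{n-1})}(1)=\sO(-E)$; and as $(\psi,\rho)\colon A^{[n-1,n]}=\P(\omega_{U_n})\to A^{[n]}\times A$.

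First I would treat $(ii)$ and $(iii)$ together. Writing $\psi_1=\psi\times\id_A$, $\phi_1=\phi\times\id_A$, $\rho_1=\rho\times\id_A$, let $Z_n,Z_{n-1}\subset A^{[n-1,n]}\times A$ be the pullbacks of the universal subschemes $U_n,U_{n-1}$. The nesting condition $z'\subset z$ forces $Z_{n-1}\subset Z_n$, and the residual locus is precisely the graph $\Gamma_\rho=\rho_1^{-1}(\Delta_A)$ of the residual-point morphism. Comparing the two ideal sheaves gives a short exact sequence
\[
0\to\rI_{Z_n}\to\rI_{Z_{n-1}}\to\sQ\to 0,
\]
where $\sQ$ is supported on $\Gamma_\rho$ and is identified, via the tautological quotient, with $(\rL\boxtimes\sO_A)|_{\Gamma_\rho}$. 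Since $[\rI_{Z_n}]=\psi_1^!\rI_n$, $[\rI_{Z_{n-1}}]=\phi_1^!\rI_{n-1}$ and $[\sO_{\Gamma_\rho}]=\rho_1^!(\sO_{\Delta_A})$ in $K$-theory, taking classes yields $(iii)$. The same sheaf $\sQ$ is the kernel of the surjection $\sO_{Z_n}\surj\sO_{Z_{n-1}}$; pushing this forward along $A^{[n-1,n]}\times A\to A^{[n-1,n]}$, and using flat base change for $\rO_n=\pr_{1*}\sO_{U_n}$ (so that the pushforwards of $\sO_{Z_n},\sO_{Z_{n-1}}$ are $\psi^!\rO_n,\phi^!\rO_{n-1}$), collapses $\sQ$ to the single line bundle $\rL$, which is $(ii)$.

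For $(i)$ I would start from the $K$-theoretic description of the tangent bundle $T_n=2\cdot\sO_{A^{[n]}}-\chi_\pr(\rI_n,\rI_n)$, where $\chi_\pr$ is the relative Euler--$\Ext$ pairing; the two copies of $\sO$ are $\Ext^0(\rI_n,\rI_n)=\sO$ and its relative Serre dual $\Ext^2(\rI_n,\rI_n)=\sO$, the latter using $K_A=\sO_A$. Pulling back by $\psi$, subtracting the analogous expression for $\phi^!T_{n-1}$ so that the constant terms cancel, and substituting the relation $(iii)$, the pairing expands as $\chi_\pr(\phi_1^!\rI_{n-1}-b,\,\phi_1^!\rI_{n-1}-b)$ with $b=(\rL\boxtimes\sO_A)\cdot\rho_1^!\sO_{\Delta_A}$, the pairing being symmetric because $K_A=\sO_A$. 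Once the $\phi^!T_{n-1}$ piece is separated off, what remains are the mixed terms $\chi_\pr(\phi_1^!\rI_{n-1},b)+\chi_\pr(b,\phi_1^!\rI_{n-1})$ together with the residual self-term $\chi_\pr(b,b)$. Evaluating these $\Ext$-contributions from the local structure of $\rI_{n-1}$ at the residual point and the tautological quotient $\sigma^*\rI_{n-1}\surj\rL$ produces exactly $\rL\cdot\sigma^!\rI_{n-1}\dual$ and the constant $1$; the rank identity $2(n-1)+1+1=2n$ gives a consistency check, and the triviality of $T_A$ is what removes the Todd-class corrections present in the general-surface computation of \cite{MR1795551}. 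This $\Ext$-pairing bookkeeping is the step I expect to be the main obstacle.

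Finally, $(iv)$ is a Segre-class computation for the blow-up. Under $\sigma\colon\P(\rI_{n-1})\to A^{[n-1]}\times A$ with $c_1(\rL)$ the relative hyperplane class, the Grothendieck relation defining the Chow ring of $\P(\rI_{n-1})$ together with the projection formula express $\sigma_*\bigl(c_1(\rL)^i\bigr)$ as the degree-$i$ part of the inverse total Chern class $c(\rI_{n-1})^{-1}=c(-\rI_{n-1})$; tracking the sign coming from $\rL=\sO(-E)$ gives $\sigma_*(c_1(\rL)^i)=(-1)^i c_i(-\rI_{n-1})$. The full computations on a general surface are carried out in \cite{MR1795551}; my contribution here would be only to record the simplifications afforded by $A$ being abelian.
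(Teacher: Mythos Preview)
The paper does not prove this theorem at all: it is quoted verbatim from Ellingsrud--G\"ottsche--Lehn \cite{MR1795551} (with the simplifications coming from $T_A$ trivial and $K_A=\sO_A$ already built into the statement), and is used as a black box in the induction step for Proposition~\ref{prop:recurrence}. So there is no ``paper's own proof'' to compare against; what you have written is a reasonable outline of how the cited reference establishes these identities.

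Your treatment of $(ii)$, $(iii)$ and $(iv)$ is correct and is exactly the argument in \cite{MR1795551}: the basic short exact sequence on $A^{[n-1,n]}\times A$ coming from $Z_{n-1}\subset Z_n$ with residual quotient supported on $\Gamma_\rho$, and the Segre-class identity for the projective bundle $\sigma\colon\P(\rI_{n-1})\to A^{[n-1]}\times A$.

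For $(i)$ there is one point worth flagging. Your starting formula $T_n=2\cdot\sO-\chi_\pr(\rI_n,\rI_n)$ is the K3 version; on an abelian surface $h^1(\sO_A)=2$, so the trace part of $\Ext^1_\pr(\rI_n,\rI_n)$ contributes an extra $2\cdot\sO$ and the correct identity in $K$-theory is $T_n=-\chi_\pr(\rI_n,\rI_n)$ (equivalently, one must use the traceless $\Ext$ throughout). This does not damage your argument, since after forming the difference $\psi^!T_n-\phi^!T_{n-1}$ the constant terms drop out anyway and only the cross-terms with $b=(\rL\boxtimes\sO_A)\cdot\rho_1^!\sO_{\Delta_A}$ survive---but you should state the correct formula. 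Note also that \cite{MR1795551} itself proves $(i)$ by a more hands-on route (comparing the tangent sheaves of $A^{[n-1,n]}$ via the two projective-bundle descriptions and the normal bundle of the exceptional divisor) rather than through the global $\chi_\pr$-pairing; your approach is legitimate but is not literally what the reference does.
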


Return to the proof of Proposition \ref{prop:recurrence}. Taking the Chern classes of both sides of $(i), (ii), (iii)$ in Theorem \ref{thm:EGL}, we get formulae for pull-backs by $\psi$ or $\psi_1$ of the Chern classes of $T_n, \rO_n, \rI_n$ in terms of polynomial expressions of the first Chern class of $\rL$ and the pull-backs by $\phi, \rho, \sigma$ of the Chern classes of $T_{n-1}, \rO_{n-1}, \rI_{n-1}$ and $\sO_{\Delta_A}$. Therefore by the calculations in Lemma \ref{lemma:chern} and the fact that $\sigma=(\phi, \rho)$, we obtain that $\psi_m^*(\gamma)\in \CH\left(A^{[n-1, n]}\times A^m\right)$  is a polynomial of cycles of the following five forms:
\begin{itemize}
\item $\sigma^*_{m}\circ \pr_0^* \left(c_j(T_{n-1})\right)$ for some $j\in \N$;
\item $\pr_0^*\left(c_1(\rL)\right)$;
\item $\sigma^*_m\circ\pr^*_{0i}\left(c_j(\rI_{n-1})\right)$ for some $1\leq i\leq m+1$ and $j\in\N$;
\item $\sigma^*_{m}\circ \pr_0^* \left(c_j(\rO_{n-1})\right)$ for some $j\in \N$;
\item $\sigma^*_m\circ\pr_{ij}^*\left(\Delta_A\right)$ for some $1\leq i\neq j\leq m+1$,
\end{itemize}
where we also use $\pr_0$ to denote the projection $A^{[n-1, n]}\times A^m\to A^{[n-1, n]}$, \etc\\
When apply $\sigma_{m*}$ to a polynomial in cycles of the above five types, using the projection formula for the birational morphism $\sigma_m$ and Theorem \ref{thm:EGL}$(iv)$, we conclude that $\sigma_{m *} \circ \psi_{m}^*\left(\gamma\right)$ is of the desired form. This finishes the proof of Proposition \ref{prop:recurrence} thus completes the proof of Proposition \ref{prop:Voisinhard}.
\end{proof}

%\newpage
%\begin{proof}
%  Let us remark first that the formulae of Ellingsrud-G\"ottsche-Lehn in
%  \cite{MR1795551} hold for any surface. The argument for \cite[Proposition 2.6]{MR2435839} goes
%  through very well for an abelian surface, by replacing everywhere the K3 surface $S$ by the abelian surface
%  $A$, and removing everywhere the canonical 0-cycle $o$ of K3 surface.
%  There are only two small differences:
%  \begin{itemize}
%    \item At each time the tangent bundle of the surface is involved
%    in Voisin's argument,  the case of abelian surface is
%    even simpler, as $T_A$ is trivial. It is the reason why the canonical 0-cycle
%    $o$ disappear in our situation.
%    \item As for the computation in the beginning of Page
%    628 of \cite{MR2435839}, we can do the same thing for abelian
%    surface: apply the Grothendieck-Riemann-Roch formula to the diagonal
%    embedding $A\inj A\times A$, we have (since $\td\left(T_A\right)=\td\left(T_{A\times A}\right)=1$): $$\ch(\sO_{\Delta_A})=\Delta_A \in \CH^2(A\times
%    A).$$ Therefore,
%    $c_1\left(\sO_{\Delta_A}\right)=c_3\left(\sO_{\Delta_A}\right)=c_4\left(\sO_{\Delta_A}\right)=0$
%    and
%    $c_2\left(\sO_{\Delta_A}\right)=-\Delta_A$.
%  \end{itemize}
%\end{proof}

\section*{Acknowledgements}
I would like to express my gratitude to Claire Voisin for her excellent mini-course at the colloquium GRIFGA as well as for the helpful discussions afterwards, to Kieran O'Grady for raising the question at the colloquium which is the main subject of this paper. I would like to thank Giuseppe Ancona for explaining me the result of O'Sullivan, to thank Ulrike Greiner for pointing out to me a gap in a first version and helping me to fix it. Finally, I want to thank Zhi Jiang for his careful reading of the preliminary version of this paper and the referees for their helpful suggestions which improved the paper a lot.

\bibliographystyle{amsplain}
\bibliography{biblio_fulie}

\end{document}